\let\@@pmod\mod
\DeclareRobustCommand{\mod}{\@ifstar\@pmods\@@pmod}
\def\@pmods#1{\mkern4mu({\operator@font mod}\mkern 6mu#1)}
\definecolor{blue}{rgb}{0,0,1}
\definecolor{red}{rgb}{1,0,0}
\definecolor{green}{rgb}{0,.6,.2}
\definecolor{purple}{rgb}{0.75,0,0.75}
\definecolor{teal}{rgb}{0,0.65,0.65}
\long\def\red#1\endred{\textcolor{red}{#1}}
\long\def\blue#1\endblue{\textcolor{blue}{#1}}
\long\def\purple#1\endpurple{\textcolor{purple}{ #1}}
\long\def\green#1\endgreen{\textcolor{green}{#1}}
\long\def\teal#1\endteal{\textcolor{teal}{#1}}
\newcommand{\sm}{\left(\begin{smallmatrix}}
\newcommand{\esm}{\end{smallmatrix}\right)}
\newcommand{\bpm}{\begin{pmatrix}}
\newcommand{\ebpm}{\end{pmatrix}}
\newcommand{\ZZ}{\mathbb{Z}}
\newcommand{\QQ}{\mathbb{Q}}
\newcommand{\RR}{\mathbb{R}}
\newcommand{\NN}{\mathbb{N}}
\newcommand{\CC}{\mathbb{C}}
\newcommand{\frakM}{\mathfrak{M}}
\newcommand{\frakm}{\mathfrak{m}}
\newcommand{\sign}{\textup{sign}}
\newcommand{\Imag}{\textup{Im}}
\newcommand{\arcsinh}{\textup{arcsinh}}
\newcommand{\eps}{\varepsilon}
\DeclareMathOperator{\1}{\mathbf{1}}
\newtheorem{theorem}{Theorem}
\newtheorem{lemma}[theorem]{Lemma}
\newtheorem{proposition}[theorem]{Proposition}
\theoremstyle{remark}
\numberwithin{theorem}{section}
\numberwithin{equation}{section}
\renewcommand{\phi}{\varphi}
\renewcommand{\Re}{\operatorname{Re}}
\newcommand{\Err}{\operatorname{Err}}
\title{Murmurations of Maass forms}
\author{Andrew R. Booker}
\author{Min Lee}
\author{David Lowry-Duda}
\author{Andrei Seymour-Howell}
\author{Nina Zubrilina}
\thanks{The first author is supported by the Heilbronn Institute for Mathematical Research.
The second author is supported by a Royal Society University Research Fellowship.
The third author is supported by the Simons Collaboration in Arithmetic
Geometry, Number Theory, and Computation via the Simons Foundation grant
546235. The fourth author is supported by the Royal Society. The last author is supported by the Hertz Foundation and the National
Science Foundation.}
\begin{document}

\begin{abstract}
We prove the existence of murmurations in the family of Maass forms of 
weight $0$ and level $1$ with their Laplace eigenvalue parameter going to infinity (i.e., correlations between the parity and Hecke eigenvalues at primes growing in proportion to the analytic conductor).
\end{abstract}

\maketitle

\section{Introduction}
``Murmurations'' in families of automorphic forms, first observed by He, Lee, Oliver, and Pozdnyakov~\cite{HLOP} and later in~\cite{sutherland}, are a correlation between root numbers of $L$-functions and their Dirichlet coefficients.
Zubrilina~\cite{zubrilina} and Bober, Booker, Lee, and Lowry-Duda~\cite{bober2023murmurations} have given theoretical confirmation for this bias in archimedian and non-archimedian families of holomorphic modular forms.
So far, the families that have been studied (elliptic curves, modular forms, and Dirichlet characters) are arithmetic in nature.
This raises the question: does the phenomenon also take place in non-arithmetic settings?

In this work, we demonstrate this phenomenon in the family of weight $0$ level $1$ Maass forms ordered by analytic conductor (eigenvalue). Since this family of automorphic forms does not have an arithmetic analogue, it suggests that murmurations are an analytic phenomenon that can occur in families of $L$-functions coming from non-arithmetic objects.

To state our result, let $\{f_j\}$ denote a Hecke eigenbasis of weight $0$ Maass cusp forms of level $1$ with Laplace eigenvalues $\lambda_j = \frac14 + r_j^2$. 
Without loss of generality, we assume $r_j \geq 0$.
For an eigenform $f_j$, denote its Hecke eigenvalues by $a_j(n)$ for $n \neq 0$, and let $\epsilon_j = \epsilon(f_j)$ be the parity of the form, i.e., $\epsilon_j = 1$ if $f_j$ is even, and $\epsilon_j = -1$ if $f_j$ is odd. Note that $\epsilon_j$ coincides with the root number of the $L$-function of $f_j$.
Define the analytic conductor of a Maass form $f$ with eigenvalue $\lambda = \frac14 + R^2$ by
\begin{align}\label{conductor}
\mathcal{N}(R)
:= \frac{\exp \left( \psi \left( \frac{1/2+a+iR}{2} \right)
+ \psi \left( \frac{1/2+a-iR}{2} \right) \right)}{\pi^2}
= \frac{R^2}{4 \pi^2} + O(1), %\footnotemark
\end{align}
%\footnotetext{From the asymptotic expansion of the digamma function, for $x > 0$, $ \psi(x+iy) = \log (x + i y) - \frac{1}{2(x + iy)} + O\left( \frac{1}{(x + iy)^2}\right) =  \sign(y) \frac{i \pi}{2} + \log|y| - i \frac{x}{y} + \frac{i}{2y}+ O(|y|^{-2})$.}
where $a = 0$ if $f$ is even and $a=1$ if $f$ is odd. We prove the following:
\begin{theorem}\label{mainthm}
Assume GRH for $L$-functions of Dirichlet characters and Maass forms. 
Let $E \subset \RR_+$ be a fixed compact interval with $\lvert E \rvert > 0$.
Let $R,H \in \RR_{>0}$ with $R^{\frac{5}{6} + \delta} < H < R^{1 - \delta}$ for some $\delta > 0$ 
and let $N = \mathcal{N}(R)$.
Then as $R \to \infty$, we have
\begin{equation}\label{eqn:maineqn}
\frac{\sum_{\substack{p\,\mathrm{prime}\\p/N\in E}}\log{p}\sum_{|r_j-R|\le H}\epsilon_j a_j(p)}
{\sum_{\substack{p\,\mathrm{prime}\\p/N\in E}}\log{p}\sum_{|r_j-R|\le H}1}
= \frac{1}{\sqrt{N}}\frac{\nu(E)}{\lvert E \rvert} + o_{E}\bigg(\frac{1}{\sqrt{N}}\bigg),
\end{equation}
where
\begin{align}
\nu(E) = \frac{1}{\zeta(2)} \sum_{\frac{q^2}{a^2} \in E}^*
\frac{\mu(q)^2}{ \phi(q)^2 \sigma(q)}\left(\frac{a}{q}\right)^{-3}.
\end{align}
Here the $*$ indicates the terms occuring at the endpoints of $E$ are halved. 
\end{theorem}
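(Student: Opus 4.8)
The plan is to treat the numerator and denominator separately. The denominator is elementary: $\sum_{p/N\in E}\log p\sim N\lvert E\rvert$ by the prime number theorem and $\sum_{\lvert r_j-R\rvert\le H}1\sim RH/3$ by Weyl's law for $\mathrm{PSL}_2(\mathbb Z)\backslash\mathbb H$ (the scattering term and the error term in Weyl's law being $o(RH)$ in the stated range of $H$), so the denominator is $\sim\tfrac13 N\lvert E\rvert RH$; it is through this factorization, together with $N=\tfrac{R^2}{4\pi^2}+O(1)$, that the $1/\sqrt N$ and the overall normalization of the main term will appear. All the content is in the numerator, which I would attack through the Kuznetsov formula.

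First I would replace $\mathbf{1}_{\lvert r_j-R\rvert\le H}$ by a smooth even bump $h$ concentrated on $[R-H,R+H]$; the discrepancy is $\sum_j(\mathbf{1}-h)(r_j)F_j$ with $F_j:=\sum_{p/N\in E}(\log p)\,\epsilon_j a_j(p)$, which I control by Cauchy--Schwarz, using $\epsilon_j^2=1$ to reduce the second moment $\sum_j|F_j|^2(\text{cutoff})$ to the parity-free average $\sum_j(\text{suitable cutoff})\,\bigl|\sum_p(\log p)a_j(p)\bigr|^2$ bounded by the spectral large sieve. Next I would remove the Kuznetsov harmonic weight $\omega_j$ (which up to a constant is $\lvert\rho_j(1)\rvert^2/\cosh(\pi r_j)$ for the Fourier coefficients $\rho_j(n)$, and is $\asymp L(1,\mathrm{sym}^2 f_j)^{-1}$) by writing $1=\omega_j\cdot\omega_j^{-1}$ and expanding $\omega_j^{-1}$ as a short Dirichlet polynomial $\propto\sum_{\ell\le L}a_j(\ell^2)/\ell$, the tail controlled under GRH for symmetric-square $L$-functions; using the Hecke relation $a_j(p)a_j(\ell^2)=a_j(p\ell^2)$ (valid for $p\nmid\ell$; the complementary terms are negligible), the smoothed numerator becomes, up to a constant,
\[
\sum_{\ell\le L}\frac1\ell\sum_{p/N\in E}(\log p)\sum_j \epsilon_j\,a_j(p\ell^2)\,\omega_j\,h(r_j).
\]

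To the inner sum I would apply the opposite-sign Kuznetsov formula with indices $(1,-p\ell^2)$: since $\rho_j(-m)=\epsilon_j\rho_j(m)$ the parity is built in automatically, there is no diagonal term ($p\ell^2\ne1$), the continuous-spectrum contribution is $O\bigl(H(\log R)^{O(1)}\bigr)$ after the $p$- and $\ell$-sums --- negligible against the expected main term of size $\asymp R^2H$ --- and what survives is $\sum_c c^{-1}S(1,-p\ell^2;c)\,\mathcal H^-h\bigl(\tfrac{4\pi\ell\sqrt p}{c}\bigr)$, where $S(m,n;c)$ is the Kloosterman sum and $\mathcal H^-h(x)=\tfrac{2}{\pi^2}\int_{\mathbb R}K_{2it}(x)\sinh(\pi t)\,t\,h(t)\,dt$. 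For fixed $c,\ell$ the sum over $p/N\in E$ is then evaluated by the prime number theorem in arithmetic progressions (GRH for Dirichlet $L$-functions): $S(1,-p\ell^2;c)$ depends only on $p\bmod c$, its average over reduced residues works out --- nonzero only for squarefree $c$ --- to a ratio of the shape $\mu((c,\ell))\phi((c,\ell))/\phi(c)$, and the $p$-sum collapses to this factor times $\int_{NE}\mathcal H^-h\bigl(\tfrac{4\pi\ell\sqrt u}{c}\bigr)\,du$ plus an error. The uniform (Olver/Airy) asymptotics of $K_{2it}$ show that $\mathcal H^-h$ is essentially supported on $x\in[2R-O(H),2R+O(H)]$ with total mass $\int_0^\infty\mathcal H^-h(x)\,dx=\tfrac1\pi\int_{\mathbb R}t\tanh(\pi t)h(t)\,dt\sim\tfrac{4RH}{\pi}$; hence $\int_{NE}\mathcal H^-h\bigl(\tfrac{4\pi\ell\sqrt u}{c}\bigr)\,du$ is nonzero precisely when $(c/\ell)^2\in E$, which upon writing $c/\ell=q/a$ in lowest terms is exactly the theorem's condition $q^2/a^2\in E$ --- with $q$ automatically squarefree since $c$ must be, and the cases $q^2/a^2\in\partial E$ contributing a half, which is the meaning of the $*$ --- the value being $\sim\tfrac{c^2R^2H}{\pi^3\ell^2}$ times that weight. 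Reparametrizing $(\ell,c)=(ad,qd)$ with $q$ squarefree and $(q,a)=(q,d)=1$, $d$ squarefree, and summing the remaining factor over $d$ (which contributes $\zeta(2)^{-1}q^2/(\phi(q)\sigma(q))$), the numerator turns into a constant multiple of $R^2H\sum^{*}_{q^2/a^2\in E}\tfrac{\mu(q)^2 q^3}{\phi(q)^2\sigma(q)a^3}=(\mathrm{const})\,R^2H\,\nu(E)$; dividing by the denominator and using $\sqrt N=\tfrac{R}{2\pi}+O(R^{-1})$ yields $\tfrac1{\sqrt N}\tfrac{\nu(E)}{\lvert E\rvert}$ once the numerical constants are reconciled.

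The main obstacle, I expect, is that the entire argument must be made quantitative with relative error $o(1)$ against a main term that is smaller than the typical size of the individual $F_j$, so it is only detectable through the trace formula and no step can be lossy. Concretely one must, on the one hand, justify the asymptotic expansion of $\mathcal H^-h$ and bound its secondary terms --- which wants $H$ not too large compared with $R$ --- and, on the other hand, bound the error in the prime number theorem in arithmetic progressions summed over residue classes weighted by $S(1,-b\ell^2;c)$ and over $c$ and $\ell\le L$, balanced against the symmetric-square truncation error --- which together force $H$ above a fixed power of $R$, the exponent $\tfrac56$ emerging from the optimization. The remaining errors (smoothing, symmetric-square truncation, prime powers, the $p\mid\ell$ terms, GRH-controlled bounds on the individual sums $\sum_{p}a_j(p)\log p$) are routine to absorb once the main term is in hand.
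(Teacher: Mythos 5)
Your proposal is correct in outline but takes a genuinely different route from the paper. The paper works with the \emph{Selberg} (Eichler--Selberg-type) trace formula in Str\"ombergsson's explicit form, applied at $n=-p$ so that the parity $\epsilon_j$ enters through $a_j(-p)=\epsilon_j a_j(p)$; the geometric side then presents the arithmetic directly as a sum of $L(1,\psi_{t^2+4p})$ over $t\in\ZZ$, and the heart of the argument is (i) replacing each $L(1,\psi_{t^2+4p})$ by the average $L(1,\overline\psi_t)$ over $n$ in a residue class (Lemma~\ref{avglemma}, which is where GRH for Dirichlet $L$-functions enters), and (ii) a circle-method evaluation of the resulting exponential sum (Proposition~\ref{prop:5.1} and Lemma~\ref{lem:innerintegral}), with the squarefree-$q$ local density $\mu(q)^2/(\phi(q)^2\sigma(q))$ coming from the major arcs. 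You instead go through \emph{Kuznetsov} with opposite-sign indices $(1,-p\ell^2)$; there the parity is built into $\rho_j(-m)=\epsilon_j\rho_j(m)$, the arithmetic shows up as Kloosterman sums $S(1,-p\ell^2;c)$ rather than class-number $L$-values, and you reach the same local density from the complete average of $S(1,-b;c)$ over reduced residues $b$ (which produces $\mu(c)^2$, hence the squarefree condition) combined with the divisor sum over $d$ in $(\ell,c)=(ad,qd)$. What each buys: the Kuznetsov route avoids the character-averaging machinery entirely (no quadratic $L$-functions, no Lemma~\ref{avglemma}), and your Cauchy--Schwarz/spectral-large-sieve control of the sharp cutoff is arguably cleaner than the paper's Lemma~\ref{smoothingerror}, which invokes GRH for $L(s,f_j)$ form by form; on the other hand, the Selberg route has no harmonic weights, so the paper never needs to expand $\omega_j^{-1}$ as a short Dirichlet polynomial in $a_j(\ell^2)$, never invokes GRH for $L(s,\mathrm{sym}^2 f_j)$, and does not face the uniform Airy-region analysis of the $K$-Bessel transform $\mathcal H^-h$ near its transition point $x\asymp 2R$ that your method requires. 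Two places you should be careful if you want to push this through: first, the continuous spectrum does \emph{not} vanish in the opposite-sign Kuznetsov formula (the Eisenstein coefficients are even in $n$), so you must actually bound it rather than hope it disappears, though it is indeed acceptably small after the $p,\ell$-sums; second, your claim that the averaged Kloosterman factor equals ``$\mu((c,\ell))\phi((c,\ell))/\phi(c)$'' and the constraint that makes $q=c/(c,\ell)$ squarefree need a careful computation for $(\ell,c)>1$, since for those $c$ one is dealing with Kloosterman sums $S(1,n;c)$ with $(n,c)>1$; a misstep there would corrupt the $\zeta(2)^{-1}$ factor and the local density in $\nu(E)$.
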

A plot of $\nu(E)$ against some numerically computed trace formula values is given in Figure \ref{Fig:nu}.

\begin{figure}
\includegraphics[width=\textwidth]{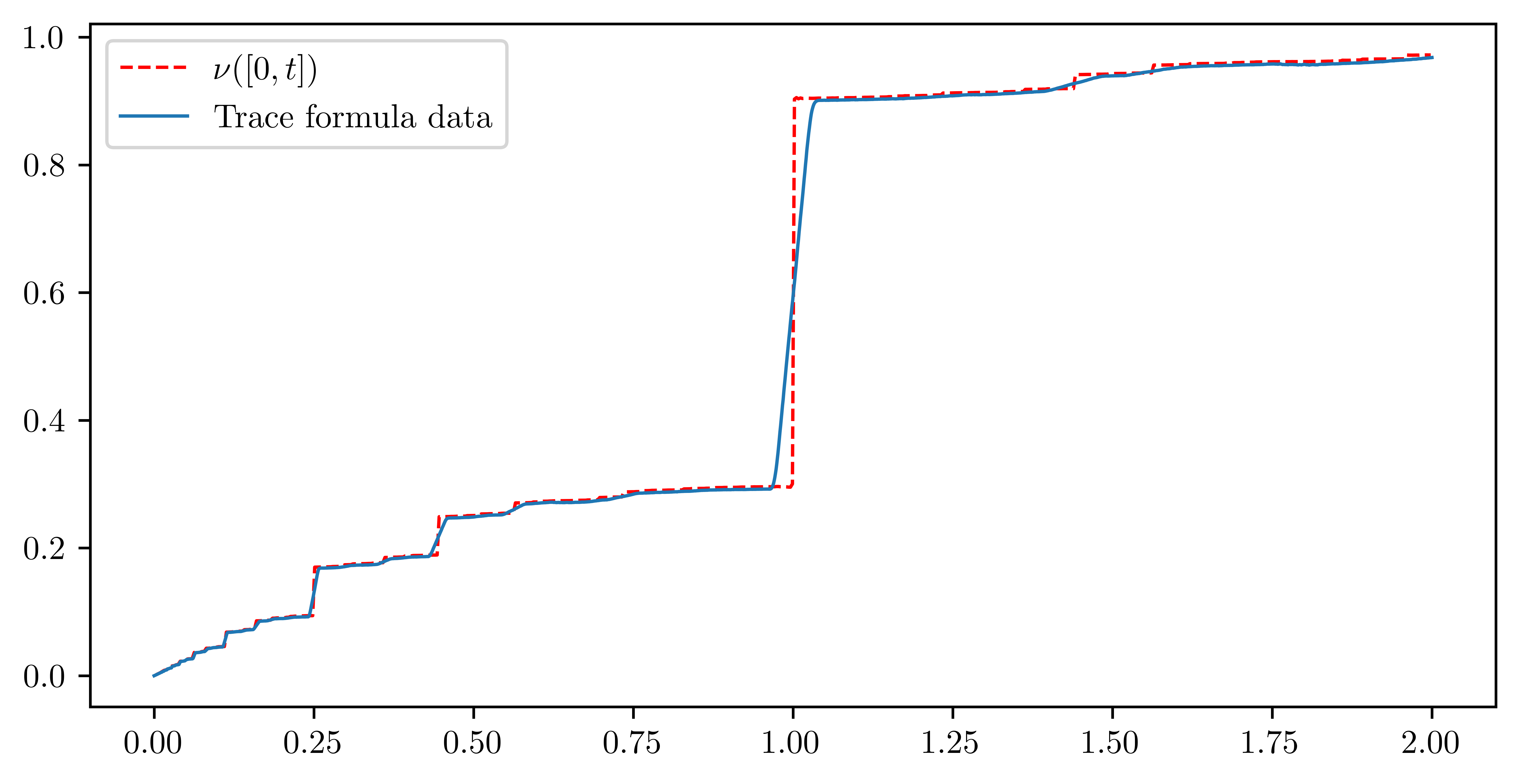}
\caption{Plot of $\nu([0,t])$ and the left hand side of \eqref{eqn:maineqn} scaled by $t \sqrt{N}$, for $R = 6900, H = 100$ and $t \in [0,2]$. The left hand side was computed using the formula \eqref{eq:traceformula}.}
\label{Fig:nu}
\end{figure}
The fact we get the same result to the holomorphic case in the weight aspect is due the relation between the weight of holomorphic forms and the Laplace eigenvalue of Maass forms. One way to see this relation is that when a weight $k$ holomorphic form is scaled by $y^{\frac{k}{2}}$, it becomes a Maass form with (weight $k$) Laplace eigenvalue $\frac{k}{2}(1 - \frac{k}{2})$ (see \cite[Exercise 2.17]{bump}).

The proof of the theorem closely follows the techniques in the paper \cite{bober2023murmurations}. The main differences in the proof stem from the differences in the trace formulae between holomorphic and non-holomorphic forms. Here, we use an explicit version of the Selberg trace formula due to Str\"{o}mbergsson \cite{Andreaspre}. The first main difference comes from the fact we need an analytic test function on the spectral side, meaning it cannot be compactly supported like in the holomorphic case. It is here we use GRH for Maass forms to control the cutoff error of using this test function to approximate the interval function. This could be removed with a smoother version, albeit with considerable more work. 

Secondly, the trace formula itself looks rather different and includes several more terms. Once we fix our choice of the test function in Section \ref{subsec:testfunction}, most of these terms are taken into the error terms, ultimately leaving us with a finite sum that will give the main term.

Lastly, similar to the holomorphic case, this sum will include $L(1,\psi_{D})$ values of quadratic fields, although with discriminant $t^2+4n$ as opposed to $t^2-4n$. This occurs due to the fact that the inclusion of the root number in the spectral side means we are actually working with $T_{-n}$ Hecke operators. Fortunately, since we replace the $L$-function values by their averages, the local analysis from \cite[Sec. 4]{bober2023murmurations} remains the same.

The overview of the proof of the holomorphic case given in \cite[Sec 2.1]{bober2023murmurations} can also be used to give an overview of the proof given here, albeit with the caveats noted above. 
Furthermore, most of remarks given in the introduction of \cite{bober2023murmurations} are also relevant here, again with the weight $K$ replaced by the Laplace eigenvalue $R$.

Explicit trace formula for Maass cusp forms have been worked out for squarefree level, also by Str\"{o}mbergsson. 
However, these formulas do not include the root number in the spectral side. 
One would need to derive an explicit formula which also includes the eigenvalue of the Fricke involution in the spectral side, but for non-holomorphic forms.

\section{Selberg Trace Formula} \label{sec:STF}

The Selberg trace formula was introduced by Selberg \cite{Sel} and intensively studied in two volume books \cite{Hej1, Hej2} by Hejhal. Here we begin by stating an explicit version of the Selberg trace formula from an unpublished result of Str\"{o}mbergsson \cite{Andreaspre}. It appears in \cite[Thm. 2.2.1]{andreithesis} and can be obtained from \cite[Prop. 2.1]{AndyMin}.

To state the Selberg trace formula we introduce the following notation. 
For $D=d\ell^2$, $d$ is a fundamental discriminant and $\ell\in \NN$, we define 
$\psi_D(n)=\left(\frac{d}{\frac{n}{\gcd(n, \ell)}}\right)$ for $n\in \ZZ$, by the Kronecker symbol. 
When $\ell=1$, $\psi_d(n)$ is a quadratic character modulo $d$. 
Then 
\begin{equation}
L(s, \psi_D) = \sum_{n=1}^\infty \frac{\psi_D(n)}{n^s}
\end{equation}
has analytic continuation to $s\in \CC$ and at $s=1$, 
\begin{equation}
L(1, \psi_D) = L(1, \psi_d) \frac{1}{\ell}\prod_{\substack{p\mid \ell\\ p^{\alpha}\|\ell}} \bigg[1+(p-\psi_d(p)) \frac{p^{\alpha}-1}{p-1}\bigg].
\end{equation}
We let $\sigma_1(n) = \sum_{d\mid n} d$ be the divisor function and $\Lambda$ be the von Mangoldt function.

\begin{theorem}[The Selberg trace formula for Maass newforms for level $1$]\label{thm:traceformula}

Fix $\delta > 0$, let $F(t)$ be an even analytic function on the strip $\lbrace t \in \CC : \lvert \Imag(t) \rvert \leq \frac{1}{4\pi} + \delta \rbrace$ such that $F(r) = O((1 + |r|^2)^{-1-\delta})$. 
Define $G$ as the Fourier transform of $F$ given by
\begin{equation}
G(u) =  \int_{-\infty}^{\infty} F(r) e^{-2 \pi iru}\,dr.
\end{equation}
Let $\lbrace f_j \rbrace$ be a sequence of normalised Hecke eigenforms of level $1$, with Laplacian eigenvalues $\lambda_j = \frac{1}{4} + r_j^2$ and respective Hecke eigenvalues $a_j(n)$ for any non-zero integer $n$. 
Then
\begin{align*}
&\frac{\sigma_1(|n|)}{\sqrt{|n|}}F \left(\frac{i}{4 \pi}\right) +  \sum_{j > 0} F\left(\frac{r_j}{2 \pi}\right) a_j(n)\\
&= \underset{\substack{t \in \ZZ\\\sqrt{D} = \sqrt{t^2 - 4n} \not\in \QQ}}{\sum} L(1,\psi_D) \cdot \begin{dcases}
G \Biggl( \log \biggl( \frac{(|t| + \sqrt{D})^2}{4 |n|} \biggr)\Biggr) & \text{if } D > 0,\\
\frac{\sqrt{\lvert D/4n \rvert}}{2 \pi} \int_{-\infty}^{\infty} \frac {G(u) \cosh(u/2)}{\sinh^2(u/2) + |D/4n|}\,du & \text{if } D < 0  \end{dcases}\\
&\quad + \underset{\substack{ad = n \\ a>0 \\ a \neq d}}{\sum} \left( \log{\pi} + \log{|a-d|} - \frac{\log(\eta(|a-d|))}{|a-d|} \right) \cdot G\left( \log \left| \frac{a}{d} \right| \right) \\
&\quad+ \frac{1}{2} \underset{\substack{ad = n \\ a>0 \\ a \neq d}}{\sum}
\int_{\bigl\lvert \log \lvert\frac{a}{d}\rvert \bigr\rvert}^{\infty} G(u) \cdot \frac{e^{u/2} + \eps e^{-u/2}}{e^{u/2} - \eps e^{-u/2} + \left| \sqrt{|a/d|} - \eps \sqrt{|d/a|} \right|}\,du\\
&\quad+ \underset{\substack{ad = n \\ a>0}}{\sum} \biggl[ G \left( \log \left| \frac{a}{d} \right| \right) \log(4e^{\gamma}) + \int_{0}^{\infty} \frac {G(u + \log \left| \frac{a}{d} \right|) - G(\log|\frac{a}{d}|)}{2 \sinh(u/2)}\,du - \frac{1}{4} F(0)\biggr]\\
&\quad+2\sum_{m=2}^{\infty} \underset{\substack{ad = n \\ a>0}}{\sum} \frac{\Lambda(m)}{m} G\left( \log\left| \frac{a}{d} \right| - 2\log{m} \right)\\
&\quad+\begin{dcases}
\Bigl[ -\frac{1}{12 \sqrt{n}} \int_{-\infty}^{\infty} \frac{G'(u)}{\sinh\left( \frac{u}{2} \right)}\,du + \left(\log\left( \frac{\pi \sqrt{n}}{2} \right) + \gamma \right) G(0) \\- \int_{0}^{\infty} \log\left(2 \sinh \left( \frac{u}{2} \right)\right) G'(u)\,du \Bigr]
& \textup{ if } \sqrt{n} \in \ZZ,
\\ 0 & \textup{ otherwise.}
\end{dcases}
\end{align*}
%where
%\begin{align*}
%L(1,\psi_D) = \frac{L(1, \psi_d)}{l} \prod_{p | l} \left[ 1 + (p - \psi_d(p)) \frac{(l,p^{\infty}) - 1}{p-1} \right],
%\end{align*}
%with $D = dl^2$, $l>0$, $d$ a fundamental discriminant, $\psi_d(p) = \left(
%\frac{d}{p} \right)$, and $\psi_D(n) = \left(\frac{d}{n/\gcd(n, \ell)}\right)$.
%Here $(l,p^{\infty})$ denotes the largest power of $p$ that divides $l$.
%Additionally $\sigma_1(n) = \sum_{d \mid n} d$ is the divisor function, $\Lambda(m)$ is the von Mangoldt function, 
Here $\eps = \sign(n)$ and $\eta(m) = \prod_{k \bmod m} \gcd(k,m)$.
\end{theorem}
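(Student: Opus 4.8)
The plan is to recognize this as the Selberg trace formula for the Hecke operator $T_n$ on $L^2\!\left(\mathrm{SL}_2(\ZZ)\backslash\mathbb{H}\right)$, specialized to level $1$ and made fully explicit. For $n<0$ the relevant operator is $T_{|n|}$ composed with the reflection $f(z)\mapsto f(-\bar z)$, whose eigenvalue on $f_j$ is the parity $\epsilon_j$, so that $a_j(n)=\epsilon_j a_j(|n|)$. Concretely one evaluates, in two ways, the integral over $\mathrm{SL}_2(\ZZ)\backslash\mathbb{H}$ of the diagonal $K_n(z,z)$ of the automorphic kernel
\begin{equation*}
K_n(z,w)=\frac{1}{\sqrt{|n|}}\sum_{\substack{\gamma\in M_2(\ZZ)\\ \det\gamma=n}}k(z,\gamma w)
\end{equation*}
(with $\gamma w$ read via $z\mapsto\gamma\bar z$ when $n<0$), where $k$ is the point-pair invariant whose Selberg transform is $F$ in the normalization built into the statement; this accounts for the $r/2\pi$ rescaling and for $G$ being the Fourier transform of $F$. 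The width $\tfrac1{4\pi}+\delta$ of the analyticity strip is exactly what is needed to make sense of the residual term below, and the decay $F(r)=O((1+|r|^2)^{-1-\delta})$ gives absolute convergence of both expansions and justifies all interchanges.

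\emph{Spectral side.} Expanding $K_n(z,z)$ in the $L^2$-spectrum produces three contributions: the residual (constant) eigenfunction, with Laplace parameter $r=i/2$ and $T_n$-eigenvalue $\sigma_1(|n|)/\sqrt{|n|}$, giving $\frac{\sigma_1(|n|)}{\sqrt{|n|}}F\!\left(\frac{i}{4\pi}\right)$; the cusp forms, giving $\sum_{j>0}F\!\left(\frac{r_j}{2\pi}\right)a_j(n)$; and the Eisenstein series, giving an integral of $F$ against the logarithmic derivative of the level-one scattering determinant $\phi(s)=\sqrt\pi\,\Gamma\!\left(s-\tfrac12\right)\zeta(2s-1)/(\Gamma(s)\zeta(2s))$ together with a $\phi(\tfrac12)$-term. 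This Eisenstein contribution is transported to the geometric side, where, after inserting the explicit $\phi$ and $\zeta'/\zeta(s)=-\sum_m\Lambda(m)m^{-s}$, it combines with the parabolic conjugacy classes (see below).

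\emph{Geometric side.} Partition $\{\gamma\in M_2(\ZZ):\det\gamma=n\}$ into $\mathrm{SL}_2(\ZZ)$-conjugacy classes by the trace $t$, equivalently the discriminant $D=t^2-4n$, and evaluate the orbital integrals: (i) when $D$ is a perfect square, $\gamma$ has rational eigenvalues $a,d$ with $ad=n$; the split classes ($a\neq d$), the parabolic classes ($a=d$, possible only when $n=\square$), and the transported Eisenstein term together produce the lines built from $\log\pi$, $\log|a-d|$ and $\eta(|a-d|)=\prod_{k\bmod|a-d|}\gcd(k,|a-d|)$, the term $G(\log|a/d|)\log(4e^\gamma)$ with its companion regularized integral and the $-\tfrac14F(0)$ (via $\phi(\tfrac12)=-1$), and the $m\ge2$ sum $2\sum_{m}\sum_{ad=n}\frac{\Lambda(m)}{m}G(\log|a/d|-2\log m)$; (ii) when $D<0$, the elliptic classes of trace $t$ correspond, up to roots of unity, to the ideal classes of the order of discriminant $D$ — a $w_D$-weighted multiple of $h(D)$ of them — a direct computation of the elliptic orbital integral gives $\frac{\sqrt{|D/4n|}}{2\pi}\int\frac{G(u)\cosh(u/2)}{\sinh^2(u/2)+|D/4n|}\,du$, and Dirichlet's class number formula replaces the count by $L(1,\psi_D)$; (iii) when $D>0$ and $\sqrt D\notin\QQ$, the hyperbolic classes of trace $t$ are counted (via the fundamental unit) by $h(D)$, each with eigenvalue ratio $\frac{(|t|+\sqrt D)^2}{4|n|}$ and hence orbital integral $G\!\left(\log\frac{(|t|+\sqrt D)^2}{4|n|}\right)$, and the regulator form of the class number formula again replaces the count by $L(1,\psi_D)$; (iv) the scalar matrix $\sqrt n\,I$ (present only for $n=\square$) contributes the identity term, which for $\mathrm{vol}(\mathrm{SL}_2(\ZZ)\backslash\mathbb{H})=\pi/3$ and after an integration by parts equals $-\frac1{12\sqrt n}\int\frac{G'(u)}{\sinh(u/2)}\,du+\left(\log\frac{\pi\sqrt n}{2}+\gamma\right)G(0)-\int_0^\infty\log(2\sinh(u/2))G'(u)\,du$. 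Assembling (i)--(iv) with the transported Eisenstein term yields the stated right-hand side.

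The main difficulty is not a single estimate — the analytic inputs (Arthur-style truncation of the Eisenstein series, absolute convergence from the decay of $F$) are routine — but the bookkeeping: one must match the parabolic and split conjugacy-class contributions against the continuous-spectrum term so that every constant ($\gamma$, $\log4$, $\log\pi$, the weight $\eta$, the $-\tfrac14F(0)$) comes out exactly, and count the elliptic and hyperbolic classes with the correct multiplicities, carefully handling the units $w_D$, the exclusion $\sqrt D\notin\QQ$, the pairing of $t$ with $-t$, and the reflection twist when $n<0$. A shorter route is to invoke \cite[Thm.~2.2.1]{andreithesis}, obtained by specializing \cite[Prop.~2.1]{AndyMin} (following \cite{Andreaspre}) to level~$1$.
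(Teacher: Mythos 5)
The paper does not contain a proof of Theorem~\ref{thm:traceformula}. It is quoted as an unpublished result of Str\"ombergsson, with the written sources given as~\cite[Thm.~2.2.1]{andreithesis} and~\cite[Prop.~2.1]{AndyMin}. Your proposal goes further than the paper by sketching the from-scratch derivation, and it also correctly identifies the citation route as the efficient alternative, which is what the paper actually takes.

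Taken as a sketch, your outline is structurally sound: you integrate the Hecke-twisted automorphic kernel $K_n(z,z)$, expand spectrally (residual at $r=i/2$, cuspidal, Eisenstein), and on the geometric side you sort the $\det\gamma=n$ matrices by discriminant $D=t^2-4n$ into scalar, split, parabolic, elliptic, and hyperbolic classes; Dirichlet's class number formula in its imaginary and real forms produces the $L(1,\psi_D)$ weights, and the Eisenstein term is matched against the parabolic/split contribution using the explicit level-$1$ scattering determinant $\phi(s)$ and $-\zeta'/\zeta=\sum_m\Lambda(m)m^{-s}$. The normalization bookkeeping you state---$F(r/2\pi)$ corresponding to the Selberg/Harish-Chandra transform $h(r)$, so that $G$ as defined is the usual $g$, and the strip width $\frac{1}{4\pi}+\delta$ being exactly what is needed to reach the residual evaluation $F(i/4\pi)$---checks out. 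What you label ``bookkeeping,'' however, is where essentially all the content of the theorem lies: the constants $\log(4e^\gamma)$, $\log(\pi\sqrt n/2)+\gamma$, $-\tfrac14F(0)$ via $\phi(\tfrac12)=-1$, the weight $\eta(|a-d|)$, and in particular the treatment of non-fundamental discriminants $D=d\ell^2$, where your phrase ``a $w_D$-weighted multiple of $h(D)$'' is silently invoking the Hurwitz-type class count $\sum_{f\mid\ell}h(D/f^2)/w_{D/f^2}$ that the character $\psi_D$ is designed to encode. None of this is carried out, so the sketch is not a proof. Since the paper itself defers exactly this to Str\"ombergsson, your writeup is consistent in spirit with the paper and not in conflict with it; the final sentence of your proposal is the route the paper takes and, for the purposes of this paper, suffices.
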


We apply the trace formula for $n = - p$ for a prime $p$, using that
\begin{equation*}
%T_{(-p)} f = \epsilon(f) f \implies
a_j(-p) = \epsilon_j a_j(p)
\end{equation*}
where $\epsilon_j\in \{-1, 1\}$ is the eigenvalue of the involution $f_j(-\bar{z}) = \epsilon_j f_j(z)$. 
Note that $\sqrt{t^2 + 4p} \in \QQ$ if and only if $4p = (t - m)(t + m)$ for some $m \in \NN$.
Since the two multiplicands have the same parity, this can only happen when $t = \pm( p-1)$.
Note also that as $F$ is even, $G$ is even and $G(-\log p) = G(\log p)$.
The trace formula becomes
\begin{equation}\label{eq:traceformula}
\begin{split}
&\frac{p+1}{\sqrt{p}}F \left(\frac{i}{4 \pi}\right) + \sum_{j > 0} F\left(\frac{r_j}{2\pi}\right) \epsilon_j a_j(p) \\
&= \underset{\substack{t \in \ZZ\\ D := t^2 + 4p \\  t \neq \pm (p-1 )}}{\sum} L(1,\psi_D)
       G \Biggl( \log \biggl( \frac{(|t| + \sqrt{D})^2}{4 p} \biggr)\Biggr) \\
    &\quad+ 2\left( \log{\pi} + \log(p+1) - \frac{\log(\eta(p+1))}{p+1} \right) \cdot G\left( \log{p} \right)  \\
    &\quad+ \int_{\log{p}}^{\infty} G(u) \cdot \frac{e^{u/2} -  e^{-u/2}}{e^{u/2} +  e^{-u/2} +  \sqrt{p} +\sqrt{1/p}}\,du \\
    &\quad+2\log(4e^{\gamma}) G(\log{p}) - \frac{1}{2} F(0)  \\
    &\quad+ \int_{0}^{\infty} \frac {G(u + \log{p}) - G(\log{p})}{2 \sinh(u/2)}\,du + \int_{0}^{\infty} \frac {G(u - \log{p}) - G(\log{p})}{2 \sinh(u/2)}\,du  \\
    &\quad+2\sum_{m=2}^{\infty} \frac{\Lambda(m)}{m} \bigl(G(\log{p} - 2\log{m} ) + G(\log{p} + 2\log{m}) \bigr).
  \end{split}
\end{equation}

\subsection{Choice of test function} \label{subsec:testfunction}
In this section, we discuss the choice of the test function $F$ that we will use in~\eqref{eq:traceformula}.
To apply the trace formula, we approximate the sharp cutoff characteristic function of the interval $[R - H, R + H]$ appearing in Theorem \ref{mainthm} with a smoothed function $F$ constructed using a result of Ingham:

\begin{theorem}[\cite{Ingham}]
There exists an even and non-negative entire Schwartz function $W$ such that $\widehat{W}(0) = 1$, $\widehat{W}$ is compactly supported on $[-1,1]$ and decreasing on $[0,1]$, and as $x \to \infty$,
\begin{equation}\label{e:Ingham}
W(x) = O\left(\exp\Big(\frac{-|x|}{\log^2(2 + |x|)}\Big)\right).
\end{equation} 
\end{theorem}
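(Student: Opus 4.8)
The plan is to construct $W$ explicitly as a normalised infinite product of squared sinc factors, following Ingham. First I would fix positive reals $(\lambda_k)_{k\ge1}$ with $\lambda_k\asymp\frac1{(k+1)\log^2(k+2)}$ and with $\sum_{k\ge1}\lambda_k=1$; in particular $\sum_k\lambda_k<\infty$, hence $\sum_k\lambda_k^2<\infty$. Set
\[
  \Phi(z)\;=\;\prod_{k=1}^{\infty}\Big(\frac{\sin(\pi\lambda_k z)}{\pi\lambda_k z}\Big)^{2},
  \qquad
  W(z)\;=\;\frac{\Phi(z)}{\int_{\RR}\Phi(x)\,dx}.
\]
Every factor of $\Phi$ is entire and equals $1+O(\lambda_k^2|z|^2)$ locally uniformly on $\CC$, so $\sum_k\lambda_k^2<\infty$ makes the product converge locally uniformly to an entire function; thus $W$ is entire, even, and non-negative on $\RR$. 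The denominator is a positive real, since the integrand is positive and, being dominated by the product of its first two factors, is $O(x^{-4})$ as $|x|\to\infty$. By construction $\widehat W(0)=\int_\RR W=1$.

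Next I would read off the Fourier-side properties. Writing $V_k(x)=\big(\frac{\sin(\pi\lambda_k x)}{\pi\lambda_k x}\big)^2$, each transform $\widehat{V_k}$ is a scalar multiple of a Fej\'er kernel: even, non-negative, supported on $[-\lambda_k,\lambda_k]$, and non-increasing on $[0,\infty)$. The partial products $\prod_{k\le n}V_k$ are dominated by $V_1V_2\in L^1(\RR)$ and converge pointwise to $\Phi$, so $\widehat\Phi=\lim_n\widehat{V_1}\ast\cdots\ast\widehat{V_n}$ uniformly. Hence $\widehat W$ is supported on $\big[-\sum_k\lambda_k,\sum_k\lambda_k\big]=[-1,1]$ and, being a limit of convolutions of even functions non-increasing on $[0,\infty)$, is itself even and non-increasing on $[0,\infty)$, in particular decreasing on $[0,1]$ as required. (A convolution of two even functions each non-increasing on $[0,\infty)$ is again of that form, by a ``layer-cake'' computation, so this step is soft.)

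The hard part is the decay bound~\eqref{e:Ingham}. Every factor of $\Phi$ is $\le1$ on $\RR$, so taking logarithms,
\[
  \log\Phi(x)\;\le\;2\!\!\sum_{k\,:\,\lambda_k|x|\ge1}\!\!\log\Big|\frac{\sin(\pi\lambda_k x)}{\pi\lambda_k x}\Big|
  \;\le\;-2(\log\pi)\,\#\{k:\lambda_k|x|\ge1\}\qquad(x\in\RR),
\]
using $\big|\frac{\sin(\pi\lambda_k x)}{\pi\lambda_k x}\big|\le\frac1{\pi\lambda_k|x|}\le\frac1\pi$ on that range; and the choice of $(\lambda_k)$ gives $\#\{k:\lambda_k|x|\ge1\}\asymp\frac{|x|}{\log^2(2+|x|)}$ as $|x|\to\infty$. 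Already this yields $W(x)\ll\exp\!\big(-c\,|x|/\log^2(2+|x|)\big)$ for some fixed $c>0$, and the same holds as $x\to-\infty$ by evenness. The real obstacle --- the content of Ingham's estimate in~\cite{Ingham}, which I would follow --- is to push $c$ up to (at least)~$1$ as the statement demands: this requires calibrating the implied constant in the definition of $(\lambda_k)$ and, crucially, retaining rather than discarding the contributions of the many factors with $\lambda_k|x|$ large, for which $\log\big|\frac{\sin(\pi\lambda_k x)}{\pi\lambda_k x}\big|$ is on average $\approx-\log(2\lambda_k|x|)$, far more negative than $-\log\pi$. Finally, the same estimate on the strip $|\Imag z|\le1$ (where $|\Phi|$ obeys the same bound up to a bounded factor, since $\sum_k\lambda_k<\infty$ controls the growth of $\sin$ off the real axis) together with Cauchy's estimates shows that $W$ and all its derivatives are rapidly decreasing, so $W$ is Schwartz, completing the verification.
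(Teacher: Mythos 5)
The paper gives no proof of this statement; it is cited to Ingham, and your construction is precisely Ingham's: an infinite product of squared sinc factors with $\lambda_k\asymp 1/((k+1)\log^2(k+2))$, $\sum_k\lambda_k=1$, so that $\widehat W$ is an infinite convolution of Fej\'er kernels with total support $[-1,1]$, and the decay of $W$ follows from counting factors with $\lambda_k|x|\ge 1$. So you are taking the same route as the paper's source.

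One remark on the point you flagged as ``the real obstacle'': with the normalisation $\sum_k\lambda_k=1$ your crude bound already suffices. Writing $\lambda_k=A/((k+1)\log^2(k+2))$ with $A=1/\sum_{k\ge1}1/((k+1)\log^2(k+2))$, a short numerical estimate gives $\sum_{k\ge1}1/((k+1)\log^2(k+2))\approx 1.3$, hence $A\approx 0.77$, and your inequality $\log\Phi(x)\le-2(\log\pi)\,\#\{k:\lambda_k|x|\ge1\}$ together with $\#\{k:\lambda_k|x|\ge1\}\sim A|x|/\log^2|x|$ yields the exponent $\approx -1.76\,|x|/\log^2|x|$, already better than the stated $-|x|/\log^2(2+|x|)$. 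So you do not in fact need to retain the finer $-\log(\lambda_k|x|)$ contributions to reach the constant $1$; that extra precision is only needed if one wants an arbitrarily large constant or a sharper admissible $\eps(x)$. The remaining soft steps (Schwartzness via Cauchy estimates on a strip, monotonicity of $\widehat W$ by convolving symmetric decreasing functions, $L^1$-domination by $V_1V_2$) are all sound.
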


We will use the following tail estimate for $W(x)$.

\begin{lemma}\label{lem:W_tail_estimate}
For $W$ as above and $T \geq 0$,
\begin{equation}\label{e:tail_Err}
\int_T^\infty W(x) \, dx 
\ll \Err(T)
:= \exp \Big(\frac{-T}{5 \log^2(T + 2)}\Big).
\end{equation}
\end{lemma}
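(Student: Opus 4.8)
The plan is to leverage the pointwise bound \eqref{e:Ingham} directly, integrating the tail estimate $W(x) \ll \exp(-x/\log^2(2+x))$ over $[T,\infty)$ and showing that the resulting integral is dominated by the claimed $\Err(T)$, in which the constant $5$ in the denominator of the exponent provides the necessary slack. First I would dispose of the range of small $T$: since $W$ is Schwartz, $\int_T^\infty W(x)\,dx \le \int_0^\infty W(x)\,dx = O(1)$, and $\Err(T) \gg 1$ uniformly for $T$ in any bounded interval, say $T \in [0, T_0]$; so it suffices to treat $T \ge T_0$ for a suitable absolute constant $T_0$. For $T \ge T_0$, I would split the integral at $2T$, writing $\int_T^\infty = \int_T^{2T} + \int_{2T}^\infty$.

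For the first piece, I would bound $W(x) \ll \exp(-x/\log^2(2+x))$ and observe that on $[T, 2T]$ the function $x \mapsto x/\log^2(2+x)$ is increasing (for $T$ large enough), so its minimum is at $x = T$; hence $\int_T^{2T} W(x)\,dx \ll T\exp(-T/\log^2(2+T))$. The point is then that the polynomial factor $T$ is absorbed: since $\log T = o\big(T/\log^2(2+T)\big)$, we have $T\exp(-T/\log^2(2+T)) = \exp\big(\log T - T/\log^2(2+T)\big) \ll \exp\big(-T/(5\log^2(T+2))\big)$ for $T$ large, because $T/\log^2 - \log T \ge T/(5\log^2)$ is equivalent to $(4/5)\,T/\log^2(2+T) \ge \log T$, which holds for all sufficiently large $T$. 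For the second piece $\int_{2T}^\infty W(x)\,dx$, I would iterate the dyadic decomposition (or simply bound it by $\sum_{k\ge 1}\int_{2^kT}^{2^{k+1}T}W$, each term estimated as above), giving a geometrically decaying sum bounded by its first term, which is again $\ll \exp(-T/(5\log^2(T+2)))$; alternatively one can just note that the same monotonicity argument on $[2T,\infty)$ combined with the super-polynomial decay of $W$ yields $\int_{2T}^\infty W(x)\,dx \ll \exp(-2T/\log^2(2+2T)) \cdot O(1) \ll \Err(T)$.

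The main obstacle — really the only subtlety — is the bookkeeping of the logarithmic factors: one must check that the loss of a polynomial (indeed exponential-of-$\sqrt{\phantom{x}}$-type is not even needed, merely polynomial) factor coming from the length of the integration range, together with the difference between $\log^2(2+x)$ evaluated at different points in the range, can all be swallowed by weakening the constant from $1$ (implicit in \eqref{e:Ingham}) to $\tfrac15$. Concretely, one wants an inequality of the shape $\dfrac{x}{\log^2(2+x)} - \log(\text{range length}) \ge \dfrac{T}{5\log^2(T+2)}$ for $x \ge T \ge T_0$, and since $\log^2(2+x) \le \log^2(2+x)$ trivially while for $x$ in a dyadic block $[2^kT, 2^{k+1}T]$ one has $\log(2+x) \ll \log(2+T) + k$, the ratio $\log^2(2+x)/\log^2(2+T)$ grows only polynomially in $k$ whereas $x/T$ grows like $2^k$; thus for $T \ge T_0$ every block contributes at most $\exp(-2^{k}T/(5\log^2(T+2)))$ up to constants, and summing over $k \ge 0$ gives the bound. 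Choosing $T_0$ large enough to validate the monotonicity and the elementary inequalities used above completes the proof.
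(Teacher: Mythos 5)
Your argument is correct and reaches the same conclusion, but it uses a different decomposition than the paper. The paper splits at $T^2$ rather than working dyadically: it writes $\int_T^\infty = \int_T^{T^2} + \int_{T^2}^\infty$, observes that for all $x\in[T,T^2]$ one has $2+x\le(T+2)^2$ and hence $\log^2(2+x)\le 4\log^2(T+2)$, so that the log factor in the exponent of \eqref{e:Ingham} can be frozen at $T$ with a single loss of a factor $4$, and then bounds $\int_T^{T^2} e^{-x/(4\log^2(T+2))}\,dx \ll \log^2(T+2)\,e^{-T/(4\log^2(T+2))}$; the logarithmic prefactor is absorbed into weakening $\tfrac14$ to $\tfrac15$. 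For the far tail it uses the elementary inequality $x/\log^2(2+x)>\sqrt x+\tfrac12\log x$ (valid for large $x$) to get $W(x)\ll e^{-\sqrt x}x^{-1/2}$, whose integral over $[T^2,\infty)$ evaluates in closed form to $2e^{-T}$ — negligible compared to the first piece. Your split at $2^kT$ does the same bookkeeping, but has to absorb the drift of the log factor block by block and then sum a series; the choice of $T^2$ as the cut point is exactly what lets the paper avoid this iteration, since on $[T,T^2]$ the logarithm at most doubles and beyond $T^2$ the Ingham decay already defeats any polynomial so crudely that a closed-form tail bound suffices. One caveat on your "alternative" for the second piece: monotonicity of $x/\log^2(2+x)$ plus "super-polynomial decay" does not by itself give $\int_{2T}^\infty W(x)\,dx \ll \exp(-2T/\log^2(2+2T))\cdot O(1)$, since you would be integrating a supremum bound over an infinite interval; you need the geometric decay across dyadic blocks, which you do correctly supply as the primary route.
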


\begin{proof}
For sufficiently large $x>0$ we have
\[\frac{x}{\log^2{(2 + x)}} > \sqrt{x} + \tfrac12\log{x}.\]
Since $x+2\le(T+2)^2$ for all $x\in[T,T^2]$, for our choice of $W$ and any sufficiently large $T$, by \eqref{e:Ingham}, 
\[\int_T^\infty W(x)\,dx
\ll \int_T^{T^2} e^{-\frac{x}{4\log^2 (T + 2)}}\,dx 
+ \int_{T^2}^\infty e^{-\sqrt{x}}x^{-\frac12}\,dx
\ll \log^2{(T+2)} e^{-\frac{T}{4\log^2 (T+2)}}
\ll e^{-\frac{T}{5\log^2(T+2)}}.\]
Since $W$ is integrable,
\[\int_T^\infty W(x)\,dx \ll e^{-\frac{T}{5\log^2(T+2)}}\]
for all $T \geq 0$.
\end{proof}

Let $h > 1$ be a parameter such that $h = o(H)$ and let
\[W_h(x) = \frac{1}{h} W\left(\frac{x}{h}\right).\]
We define
\[F:=\big(\1_{[\frac{-R-H}{2\pi},\frac{-R+H}{2\pi}]} 
+ \1_{[\frac{R-H}{2\pi},\frac{R+H}{2\pi}]}\big) \ast W_h\]
as our test function, where $\1_I$ denotes the characteristic function of the interval $I$ and $\ast$ denotes the convolution. We symmetrize the function to include intervals around both $-R$ and $R$ in order for $F$ to satisfy the assumptions of Theorem \ref{thm:traceformula}.
The Fourier transform of $F$, normalized as in the trace formula in Theorem \ref{thm:traceformula}, is given by
\begin{equation}\label{e:G_choice}
G(t)= \int_{\RR} F(r) e^{-2\pi irt} \, dr
%\widehat{F}(t)
= 2\frac{\cos( R t) \sin( H t)}{ \pi t} \widehat{W}(t h).
\end{equation}
Observe that $G$ is supported on $[-1/h, 1/h]$.

We are now ready to bound the error that comes from approximating the characteristic
function for $\lvert r_j - R \rvert < H$ by $F$.

\begin{lemma}\label{smoothingerror}
Assume GRH for the $L$-functions for Maass cusp forms $L(s,f_j)$. 
Let $E\subset \RR_+$ be a fixed compact interval with $|E|>0$.
Assuming that $R - H > h$, we have
\[\sum_{\substack{p\,\mathrm{prime}\\p/N\in E}}
\log{p} \sum_{\lvert r_j-R \rvert \le H} \epsilon_j a_j(p)
= \sum_{\substack{p\,\mathrm{prime}\\p/N\in E}}
\log{p} \sum_{j > 0} F(r_j/ 2 \pi) \epsilon_j a_j(p)
+ O \big( R^{1 + \varepsilon} h \big).\]
\end{lemma}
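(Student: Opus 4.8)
The plan is to bound the difference
\[
\sum_{\substack{p\,\mathrm{prime}\\ p/N\in E}}\log p\sum_{j>0}\Big(\1_{[R-H,R+H]}(r_j)-F(r_j/2\pi)\Big)\epsilon_j a_j(p),
\]
so the main object to estimate is, for each relevant prime $p$, the spectral sum $\sum_j \big(\1_{[R-H,R+H]}(r_j)-F(r_j/2\pi)\big)\epsilon_j a_j(p)$. Since $F$ is $\1_{[\frac{R-H}{2\pi},\frac{R+H}{2\pi}]}$ (plus its mirror near $-R$) convolved with $W_h$, the function $g(r):=\1_{[R-H,R+H]}(r)-F(r/2\pi)$ is concentrated near the two transition points $r=R\pm H$ on a scale $h$, with rapidly decaying tails governed by Lemma~\ref{lem:W_tail_estimate}: away from the edges by more than, say, $h\log^2 R$, the quantity $\lvert g(r)\rvert$ is smaller than any power of $R$. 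So morally the sum is controlled by the eigenvalues $r_j$ lying within $O(h\log^2 R)$ of $R\pm H$, of which there are $O(Rh\log^2 R)$ by Weyl's law, each contributing $\epsilon_j a_j(p)=O(p^\theta)=O(N^{\theta/2})$ trivially — but that crude bound is far too weak, so we instead need cancellation from the oscillation of the partial sums of $\epsilon_j a_j(p)$ over $r_j$.

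The key step is to exploit GRH for $L(s,f_j)$, which controls exactly these partial sums. Concretely, I would write $g$ as a (Fourier or Mellin) superposition and pair the spectral sum $\sum_j \epsilon_j a_j(p)$ against it, or more directly use that under GRH the smoothed counting function $\sum_{r_j\le T}\epsilon_j a_j(p)$ — equivalently the relevant sum over zeros / the explicit formula for $L(s,f_j)$ — exhibits square-root cancellation in the conductor. The standard route: partition the edge region into dyadic pieces at distance $\asymp 2^k h$ from $R\pm H$, on each piece bound $\sum_j g(r_j)\epsilon_j a_j(p)$ by Abel summation against the GRH bound $\sum_{\lvert r_j - R\rvert \le T, \,\pm} \epsilon_j a_j(p) \ll (R + p)^{1/2+\varepsilon}$ (or the appropriate shape with a $p^\varepsilon$ and $R^{1/2+\varepsilon}$), times the total variation of $g$ on that piece, which is $O(\Err(2^k))$ or $O(1/(2^k h))$ respectively; summing the geometric-type series in $k$ costs an extra $R^\varepsilon$ and leaves $O(R^{1/2+\varepsilon})$ per prime, i.e.\ the contribution of the tails is negligible and the edge contributes $\ll R^{1/2+\varepsilon}$. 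One must also handle the spurious contribution of $F$'s mirror interval around $-R$, but since all $r_j\ge 0$ and $H<R^{1-\delta}$, that interval is at distance $\gg R$ from the spectrum and contributes only an $\Err(R/h)$-type error.

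Finally I sum over the primes $p$ with $p/N\in E$ weighted by $\log p$: there are $\asymp N$ such primes (by the prime number theorem, since $E$ has positive measure and $N\asymp R^2$), and $\sum_{p/N\in E}\log p \ll N \asymp R^2$. Wait — that would give $R^{2}\cdot R^{1/2+\varepsilon}$, too large; so the GRH input must instead be applied with $p$ summed first, i.e.\ one bounds $\sum_{p/N\in E}\log p\,\epsilon_j a_j(p)$ uniformly or uses that the per-$p$ bound should really be thought of with an $h$ saved from the width of the edge window relative to $H$. The correct accounting: the edge window has width $\asymp h$ out of a total window of width $\asymp H$, and the honest main-term size of $\sum_{p/N\in E}\log p \sum_{\lvert r_j-R\rvert\le H}\epsilon_j a_j(p)$ is $\asymp R^{2}\cdot (H/\sqrt N)\cdot(1/\sqrt N) = O(H)$-ish; the edge error scales this down by $h/H$ after the GRH cancellation over $j$ is invoked per prime and the prime sum contributes its full length but the $j$-sum over the thin edge strip is square-root-cancelling in a window of length $Rh$, giving $\ll \sqrt{Rh}$ per prime, hence $\ll R^2\sqrt{Rh}$ — still not obviously $R^{1+\varepsilon}h$. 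The delicate point, and the main obstacle, is therefore the bookkeeping of the two cancellations (over $j$ via GRH, and over $p$ via the prime number theorem / large sieve) so that their product lands exactly at $R^{1+\varepsilon}h$; I expect one sums over $p$ first inside the explicit formula for each $f_j$, uses GRH to get $\sum_{p/N\in E}\log p\,\epsilon_j a_j(p)\ll \sqrt N\,R^\varepsilon\ll R^{1+\varepsilon}$ uniformly in $j$, and then multiplies by the $O(Rh)$ count of edge eigenvalues $r_j$ from Weyl's law, yielding $O(R^{1+\varepsilon}h)$ after absorbing logs — so the real work is making that uniform-in-$j$ GRH bound for the prime sum against the specific weight $\log p\,\1_{p/N\in E}$ rigorous, including the transition from the sharp condition $p/N\in E$ to something the explicit formula can digest.
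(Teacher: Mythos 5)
Your final paragraph arrives at the paper's actual argument. The proof proceeds exactly as you eventually guess: bound the smoothing error $\lvert\1_{[R-H,R+H]}(r_j)-F(r_j/2\pi)\rvert$ by $\Err(\xi(r_j))$ via Lemma~\ref{lem:W_tail_estimate}, where $\xi(r_j)=\frac1h\min\{\lvert r_j-(R+H)\rvert,\lvert r_j-(R-H)\rvert\}$; apply GRH for each individual $L(s,f_j)$ to bound $\sum_{p/N\in E}\log p\,\epsilon_j a_j(p)$ uniformly in $j$; and then sum over $j$ against the $\Err$ weight using Weyl's law (only the $O(Rh\log^{C}R)$ eigenvalues near the transition points $R\pm H$ matter).

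The detours along the way reveal two misconceptions worth clearing up. First, your opening gambit invokes a bound of the shape $\sum_{\lvert r_j-R\rvert\le T}\epsilon_j a_j(p)\ll(R+p)^{1/2+\varepsilon}$, i.e.\ square-root cancellation in the spectral sum over $j$ at a fixed prime. That is not a consequence of GRH for $L(s,f_j)$, and the lemma never uses any such thing: the hypothesis acts purely per form, giving cancellation in the $p$-sum, never in the $j$-sum. You abandoned this route for the wrong reason (a failed balance of exponents) rather than the right one (it is not implied by the stated hypothesis). Second, the step you flag at the end as ``the real work'' is routine. GRH for $L(s,f_j)$ gives $\sum_{p\le X}\log p\,\epsilon_j a_j(p)\ll\sqrt{X}\,\log X\,\log(X\operatorname{cond}(f_j))$ for every $X$, and differencing at the two endpoints of $N\cdot E$ handles the sharp interval condition with no regularization; with $X\asymp N\asymp R^2$ this is $\ll R^{1+\varepsilon}$. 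Once you commit to that, the argument is just Lemma~\ref{lem:W_tail_estimate} plus two standard facts, and there is no remaining obstacle. One bookkeeping caution: multiplying the per-$j$ bound $R^{1+\varepsilon}$ by the $O(Rh)$ edge count gives $R^{2+\varepsilon}h$, not $R^{1+\varepsilon}h$ as you write; that is what the argument you describe actually supports, and it still suffices for Theorem~\ref{mainthm} since $R^{2+\varepsilon}h=o(R^2H)$ under the eventual parameter choices.
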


\begin{proof}
Define
\[I_\pm = \left\{t\in\mathbb{R}:\left|t-\frac{\pm R}{2\pi}\right|\le\frac{H}{2\pi}\right\} \quad \text{ and } 
\quad F^{\pm} = \1_{I_\pm} * W_h.\]
Then $F=F^++F^-$.
The behavior of $F^{+}(t/2 \pi)$ depends on whether $t \in [R - H, R + H]$,
and also on the distance from $t$ to this interval.
To track this distance, we define
\[\xi = \xi(t; R, H) 
:= \frac{1}{h} \min \big\{%
\lvert t - (R + H) \rvert,
\lvert t - (R - H) \rvert
\big\}.\]
By definition,
\[
F^+(t/2 \pi)
= \int_{(R - H)/2 \pi}^{(R + H)/2 \pi}
W_h(t/2 \pi - \tau)\, d\tau
= \int_{(R - H - t)/2 \pi h}^{(R + H - t)/2 \pi h} W(\tau) \, d \tau.
\]
For $t \in [R-H, R+H]$, we have the main term estimate
\[
F^+(t/2\pi)
= \int_{\mathbb{R}} W(\tau) \, d \tau
- \int_{-\infty}^{(R - H - t)/2 \pi h} W(\tau) \, d\tau
- \int_{(R + H - t)/2 \pi h}^{\infty} W(\tau) \, d\tau
= 1 + O(\Err(\xi)),\]
where we have used $\widehat{W}(0) = 1$ and the tail estimate in Lemma~\ref{lem:W_tail_estimate}.
If $t > R + H$ or $t < R - H$, we use the naive bounds
\begin{align*}
F^+(t/2\pi)
&\leq \int_{-\infty}^{(R + H - t)/2 \pi h} W(\tau)\, d\tau
= O(\Err(\xi)) 
\qquad (\text{when } t > R + H),
\\ F^+(t/2\pi) 
&\leq \int_{-\infty}^{(R - H - t)/2 \pi h} W(\tau)\, d\tau
= O(\Err(\xi))
\qquad (\text{when } t < R - H).
\end{align*}

As we choose $r_j \geq 0$, $F^-$ does not contribute any terms as long as $R - H > h$.  Thus,
\begin{multline*}
\sum_{\substack{p\,\mathrm{prime}\\p/N\in E}}
\log{p} \sum_{\lvert r_j-R \rvert \le H} \varepsilon_j a_j(p)
\\ = \sum_{j>0} F(r_j/2 \pi)
\sum_{\substack{p\text{ prime}\\ p/N \in E}} \log p \ \epsilon_j a_j(p)
+ \sum_{j>0} \big(
%% DLD this term is here, but always 0
% \1_{[-R - H, -R + H]}(r_j) +
\1_{[R - H, R + H]}(r_j)
- F(r_j/2 \pi) \big)
\sum_{\substack{p\,\mathrm{prime}\\p/N\in E}}
\log p \ \epsilon_j a_j(p)
\\ = \sum_{\substack{p\text{ prime }\\ p/N \in E}} \log p 
\sum_{j>0} F(r_j/2 \pi)\epsilon_j a_j(p)
+ \sum_{j > 0} O\bigg( \exp\left(-\frac{\xi(r_j)}{5 \log^2(\xi(r_j)+2)}\right)\bigg)
\cdot  O(r_j^{1 + \varepsilon}), 
\end{multline*}
where $\xi(r_j) = \xi(r_j; R, H)$. Here, by assuming GRH for $L(s, f_j)$, we bound the inner sum over primes of the second term (tail) by $O(r_j^{1+\varepsilon})$ for each $j>0$.
%where we've bounded the inner sum over primes \green of the second term \endgreen by assuming GRH for $L(s,f_j)$ for each $j$.
It remains to bound the sum over $j$ in the error term. By Weyl's law, the number of $j$ with $\xi(r_j) \leq T$ is $O((R Th)^{1 + \varepsilon}$. Hence, the error term is bounded by
\[\sum_{j > 0} \exp\bigg(-\frac{\xi(r_j)}{5 \log^2(\xi(r_j)+2)}\bigg) r_j^{1 + \varepsilon}
\ll (Rh)^{1 + \varepsilon} 
\int_0^\infty x^{1 + \epsilon} 
\exp\bigg(\frac{-x}{5\log^2 (x + 2)}\bigg) \, dx
\ll R^{1 + \varepsilon} h.\]
\end{proof}

\subsection{Simplifying the Trace Formula}\label{subsec:simplifyTF}

In this section, we use the trace formula with the test function $F$ constructed in the previous section to compute
\begin{equation}\label{e:Sigma_def}
\Sigma:=  \sum_{\substack{p\,\mathrm{prime}\\p/N\in E}}\log{p}\sum_{j > 0} F(r_j/2 \pi) \epsilon_j a_j(p).
\end{equation}
We begin by showing that only the hyperbolic terms of the trace formula \eqref{eq:traceformula} contribute to the main term.

\begin{lemma}\label{hyperbolicterms}
\begin{equation}\label{e:Sigma}
\Sigma = \sum_{\substack{p\text{ prime}\\ p/N \in E}}\log p 
\underset{\substack{t \in \ZZ\\ D=t^2+4p\\ t \neq \pm (p-1)}}{\sum} L(1,\psi_D) G \left( \log \left( \frac{(|t| + \sqrt{D})^2}{4 p} \right)\right)
+
%O\left(R^{2 + \eps}  +\frac{ R^{3 + \eps}}{h}\right).
O\left(\frac{R^{3 + \eps}}{h}\right).
\end{equation}
\end{lemma}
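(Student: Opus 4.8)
The plan is to substitute the trace formula \eqref{eq:traceformula} (applied with $n=-p$) directly into the definition \eqref{e:Sigma_def} of $\Sigma$, solving for $\sum_{j>0}F(r_j/2\pi)\epsilon_j a_j(p)$, and then to show that, after weighting by $\log p$ and summing over primes with $p/N\in E$, every term on the geometric side except the hyperbolic/elliptic sum $\sum_t L(1,\psi_D)G(\cdots)$ contributes $O(R^{3+\eps}/h)$. The mechanism behind nearly all of these bounds is the support and size of $G$ from \eqref{e:G_choice}: $G$ is supported on $[-1/h,1/h]$ with $|G(u)|\le\min\!\big(\tfrac{2H}{\pi},\tfrac{2}{\pi|u|}\big)$, so $\int_{-1/h}^{1/h}|G(u)|\,du\ll\log(H/h)\ll R^{\eps}$ and, crucially, $|u\,G(u)|\le\tfrac2\pi$ everywhere; meanwhile every prime with $p/N\in E$ satisfies $p\asymp N\asymp R^2$, hence $\log p\asymp 2\log R\gg 1/h$ since $h>1$. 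Consequently $G(\log p)=0$, and for $u\ge0$ we also have $G(u+\log p)=0$, while $G(u-\log p)$ is nonzero only for $u$ in the narrow window $[\log p-\tfrac1h,\log p+\tfrac1h]$.

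First I would discard the ``local'' terms of \eqref{eq:traceformula}. The terms $2(\log\pi+\log(p+1)-\log\eta(p+1)/(p+1))G(\log p)$, $\int_{\log p}^\infty G(u)\,(\cdots)\,du$, $2\log(4e^\gamma)G(\log p)$, and the integral $\int_0^\infty\frac{G(u+\log p)-G(\log p)}{2\sinh(u/2)}\,du$ all vanish identically because $G$ is evaluated only at arguments $\ge\log p>1/h$. The term $-\tfrac12 F(0)$ is negligible: writing $F=F^++F^-$ as in Lemma~\ref{smoothingerror}, the point $r=0$ lies (in units of $h$) at distance $\asymp R/h\ge R^{\delta}$ from the support of either bump, so $F(0)\ll\Err(cR/h)$ is super-polynomially small by Lemma~\ref{lem:W_tail_estimate}. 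The surviving integral $\int_0^\infty\frac{G(u-\log p)}{2\sinh(u/2)}\,du$ is supported on $u\in[\log p-\tfrac1h,\log p+\tfrac1h]$, where $\sinh(u/2)\asymp\tfrac12 e^{u/2}\asymp\tfrac12\sqrt p$; it is therefore $\ll\frac1{\sqrt p}\int_{-1/h}^{1/h}|G(v)|\,dv\ll R^{\eps}/\sqrt p$, contributing $O(R^{1+\eps})$ in total. For the von Mangoldt term $2\sum_{m\ge2}\frac{\Lambda(m)}{m}(G(\log p-2\log m)+G(\log p+2\log m))$, the $+$ piece vanishes, and in the $-$ piece only prime powers with $|\log p-2\log m|\le\tfrac1h$ survive, i.e.\ $m\in\sqrt p\cdot[e^{-1/(2h)},e^{1/(2h)}]$, an interval containing $O(\sqrt p/h)$ integers; using $\Lambda(m)/m\ll\log R/\sqrt p$ and $|G|\le2H/\pi$ there, this is $\ll H\log R/h$ per $p$, hence $O(R^{2+\eps}H/h)=O(R^{3+\eps}/h)$ after summing, since $H<R^{1-\delta}$.

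The dominant error is the identity term $\frac{p+1}{\sqrt p}F(i/4\pi)$. By Fourier inversion (legitimate since $G$ is compactly supported), $F(i/4\pi)=\int_\RR G(u)e^{-u/2}\,du=F(0)+\int_{-1/h}^{1/h}G(u)(e^{-u/2}-1)\,du$; since $F(0)$ is negligible as above and $|e^{-u/2}-1|\ll|u|$ on $[-1/h,1/h]$ while $|u\,G(u)|\le\tfrac2\pi$, we obtain $|F(i/4\pi)|\ll 1/h$. Therefore $\sum_{p/N\in E}\log p\cdot\frac{p+1}{\sqrt p}\,|F(i/4\pi)|\ll\frac1h\sum_{p/N\in E}\log p\cdot\sqrt p\ll R^3/h$ by the Chebyshev bound $\sum_{p/N\in E}\log p\ll N$. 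Assembling all contributions, only the hyperbolic sum remains as a main term and everything else is $O(R^{3+\eps}/h)$, which is \eqref{e:Sigma}.

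I expect the crux to be the bound $|F(i/4\pi)|\ll1/h$: the naive estimate $\int|G(u)|e^{-u/2}\,du\ll\log(H/h)$ is far too lossy, and the full saving of a factor $h$ is exactly the point where one must (i) recognise that $\int G=F(0)$ is super-polynomially small because $F$ concentrates near $\pm R/2\pi$, and (ii) exploit the cancellation $|u\,G(u)|\ll1$ against the vanishing $|e^{-u/2}-1|\ll|u|$. The most computation-heavy step is the von Mangoldt sum, where the tiny multiplicative window of prime powers $m\asymp\sqrt p$ cut out by the support of $G$ must be located carefully; a sharper treatment splits that window dyadically in $|\log p-2\log m|$ and uses $|G(u)|\le2/(\pi|u|)$, but the crude count above already suffices for the stated exponent.
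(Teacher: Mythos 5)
Your proposal is correct and follows essentially the same route as the paper: plug the $n=-p$ trace formula into the definition of $\Sigma$, use the compact support of $G$ on $[-1/h,1/h]$ together with $\log p\asymp\log R\gg 1/h$ to make the terms involving $G(\log p)$ and $G(u+\log p)$ (for $u\geq 0$) vanish identically, bound the surviving $\int_0^\infty G(u-\log p)/(2\sinh(u/2))\,du$ and the von Mangoldt sum by $O(R^\eps)$ per prime, and isolate the identity term $\tfrac{p+1}{\sqrt p}F(i/4\pi)$ as the source of the $O(R^{3+\eps}/h)$ error.

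One place worth flagging where you do something different and in fact better: the paper asserts $F(i/4\pi)=\int_{-1/h}^{1/h}G(u)e^{-u/2}\,du\ll 1/h$ on the grounds that ``$G$ is absolutely bounded independently of $h$ or $H$.'' That justification is incorrect: $G(0)=2H/\pi$, so $\|G\|_\infty\asymp H$ and the naive $L^1$ estimate gives only $\int_{-1/h}^{1/h}|G|\ll\log(H/h)$. Your decomposition $F(i/4\pi)=F(0)+\int_{-1/h}^{1/h}G(u)(e^{-u/2}-1)\,du$, with $F(0)$ super-polynomially small by Lemma~\ref{lem:W_tail_estimate} (since $(R-H)/(2\pi h)\gg R^\delta$) and $|u\,G(u)|\leq 2/\pi$ killing the $e^{-u/2}-1=O(|u|)$ factor, is the correct way to recover $F(i/4\pi)\ll 1/h$. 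You also explicitly track the $-\tfrac12 F(0)$ term, which the paper's displayed decomposition drops without comment; as you note, it is negligible for the same reason. For the von Mangoldt sum your crude $O(H\log p/h)$ per prime is weaker than the paper's $O((\log p)^2)$ (which exploits the $1/|u|$ decay of $G$ against the $\asymp 1/\sqrt p$ spacing of $\log(p/m^2)$), but both stay comfortably inside the stated budget since $H<R^{1-\delta}$.
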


\begin{proof}
Using that $G$ has support $[-1/h, 1/h]$ that goes to $0$, for a fixed large prime $p$, \eqref{eq:traceformula} says that $\sum_{j > 0} F(r_j/2 \pi) \epsilon_j a_j(p)$ is given by
\begin{align}
\underset{\substack{t \in \ZZ\\D=t^2+4p\\ t \neq \pm (p-1)}}{\sum} L(1,\psi_D) & G \left(\log \left( \frac{(|t| + \sqrt{D})^2}{4 p} \right)\right)    
-\frac{p + 1}{\sqrt{p}} F\left(\frac{i}{4 \pi}\right) 
\nonumber %\label{e:Sigma_hyp}
\\ & + \int_{\log{p}}^{\infty} G(u) \cdot \frac{\sinh(u/2)}{\cosh(u/2) +  \cosh((\log p)/2) }\,du  %\label{e:Sigma_int1}
+ \int_{0}^{\infty} \frac {G(u + \log{p}) - G(\log{p})}{2 \sinh(u/2)}\,du \label{e:Sigma_int1}
\\ & + \int_{0}^{\infty} \frac {G(u - \log{p}) - G(\log{p})}{2 \sinh(u/2)}\,du \label{e:Sigma_int2}
\\ & + O\bigg(\sum_{m\in [\sqrt{p} e^{-\frac{1}{2h}}, \sqrt{p} e^{\frac{1}{2h}}]\cap \ZZ} \frac{\log m}{m}\bigg). \label{e:Sigma_para}
\end{align}
For \eqref{e:Sigma_para}, we get
\[\sum_{m\in [\sqrt{p} e^{-\frac{1}{2h}}, \sqrt{p} e^{\frac{1}{2h}}]\cap \ZZ} \frac{\log m}{m}
= O((\log p)^2).\]
%\begin{align*}
%&=\underset{\substack{t \in \ZZ\\ t \neq \pm (p-1)}}{\sum} L(1,\psi_D) G \left( \log \left( \frac{(|t| + \sqrt{D})^2}{4 p} \right)\right)  -\frac{p + 1}{\sqrt{p}} F\left(\frac{i}{4 \pi}\right) + O\left( (\log p)^2 \right)  \\
%&+ \int_{\log{p}}^{\infty} G(u) \cdot \frac{e^{u/2} - \ e^{-u/2}}{e^{u/2}  +  e^{-u/2} +  \sqrt{p}+ \sqrt{1/p}}\,du    \\
%&+\int_{0}^{\infty} \frac {G(u + \log{p}) - G(\log{p})}{2 \sinh(u/2)}\,du + \int_{0}^{\infty} \frac {G(u - \log{p}) - G(\log{p})}{2 \sinh(u/2)}\,du.
%\end{align*}
The integrals in \eqref{e:Sigma_int1} %and \eqref{e:Sigma_int2} 
%\[\int_{\log{p}}^{\infty} G(u) \cdot \frac{e^{u/2} - \ e^{-u/2}}{e^{u/2}  +  e^{-u/2} +  \sqrt{p} + \sqrt{1/p}}\,du
%\quad \text{and} \quad
%\int_{0}^{\infty} \frac {G(u + \log{p}) - G(\log{p})}{2 \sinh(u/2)}\,du\]
asymptotically vanish for $p/N \in E$, since $p \to \infty$ while the support of $G$ goes to $0$.
These also imply that the integral \eqref{e:Sigma_int2} can be bounded by
\[\int_{0}^{\infty} \frac {G(u - \log{p}) - G(\log{p})}{2 \sinh(u/2)}\,du
\ll \int_{-1/h}^{1/h} \frac {G(u ) }{2 \sinh(u/2 + \log p/2)}\,du
\ll 1.\]

Finally, note that
\[F\left(\frac{i}{4 \pi}\right)
= \int_{-\frac{1}{h}}^{\frac{1}{h}} G(u) e^{-u/2} du
\ll \frac{1}{h}\]
as $G$ is absolutely bounded independently of $h$ or $H$.
Thus
\[\frac{p + 1}{\sqrt{p}} F(i/4 \pi)
\ll \frac{\sqrt{p}}{h}
\ll \frac{R}{h}\]
for $p/N \in E$.
Summing over the $\asymp R^2$ primes $p$ with $p/N \in E$ and noting that $h \ll R$ gives the claimed error in \eqref{e:Sigma}.
\end{proof}

Next, we show that we can replace $\log\big((|t| + \sqrt{D})^2/4p\big)$ with its first-order approximation.

\begin{lemma}\label{lemma:firstorder}
We have
\[
\Sigma
= \sum_{t \in \ZZ}
\sum_{\substack{p\,\mathrm{prime}\\4 \pi^2p/R^2\in E}}
2 \log{p}\sqrt{p} L(1,\psi_{t^2+4p})    
\frac{\cos\left(R\frac{t}{ \sqrt{p}}\right) \sin\left(H\frac{t}{\sqrt{p}}\right)}{\pi t}
\widehat{W}\left( h\frac{t}{\sqrt{p}}\right)
+ O \left(\frac{ R^{4 + \eps}}{h^3} + \frac{R^{3 + \eps}}{h}\right).
\]
\end{lemma}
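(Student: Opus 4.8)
The plan is to start from Lemma~\ref{hyperbolicterms} and to replace the argument of $G$ in the hyperbolic term by its linear part. With $D=t^2+4p$, the substitution $|t|=2\sqrt p\,y$ gives the exact identity $\log\!\big((|t|+\sqrt D)^2/4p\big)=2\arcsinh(|t|/2\sqrt p)$, so the hyperbolic summand is $G(L_t)$ with $L_t:=2\arcsinh(|t|/2\sqrt p)$. Since $G$ is supported on $[-1/h,1/h]$ and $2\arcsinh(y)\asymp y$ on $[0,1]$, only integers with $|t|\ll\sqrt p/h\ll R/h$ contribute; in particular the excluded indices $t=\pm(p-1)$ — the only $t$ making $D=(p+1)^2$ a square — lie far outside this range, so the constraint $t\neq\pm(p-1)$ may be dropped, the factor $L(1,\psi_{(p+1)^2})$ being multiplied there by a vanishing $\widehat W$. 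Writing $\ell_t:=|t|/\sqrt p$, one checks that $\log p\cdot L(1,\psi_D)\cdot G(\ell_t)$ is exactly the summand of the claimed main term, using that $\cos$, $u\mapsto\sin(Hu)/u$ and $\widehat W$ are even so that passing from $|t|$ to $t$ is harmless. It therefore suffices to bound $\sum_{p/N\in E}\log p\sum_t L(1,\psi_D)\big(G(L_t)-G(\ell_t)\big)$ and then to replace the condition $p/N\in E$ by $4\pi^2 p/R^2\in E$.

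Two ingredients drive the estimate: the Taylor bound $2\arcsinh(y)=2y+O(y^3)$ on $[0,1]$, giving $|L_t-\ell_t|\ll|t|^3/p^{3/2}$; and the derivative bound $|G'(u)|\ll R/|u|$, valid whenever $|u|\gg1/R$, obtained by the product rule applied to $G(u)=\tfrac2\pi\cos(Ru)\cdot\tfrac{\sin(Hu)}{u}\cdot\widehat W(uh)$ together with $|\widehat W|\le\widehat W(0)=1$, $\|\widehat W'\|_\infty\ll1$, $H<R$ and $h<R$. For $1\le|t|\le\sqrt p/h$ both $L_t$ and $\ell_t$ are $\asymp|t|/\sqrt p$ (hence $\gg1/R$), so the mean value theorem yields
\[ |G(L_t)-G(\ell_t)|\ll\frac{R}{|t|/\sqrt p}\cdot\frac{|t|^3}{p^{3/2}}=\frac{R|t|^2}{p}\ll\frac{|t|^2}{R}, \]
while $t=0$ contributes nothing since $L_0=\ell_0=0$. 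Summing over $1\le|t|\ll R/h$ gives $\ll R^2/h^3$; inserting $L(1,\psi_D)\ll D^{\eps}\ll R^{\eps}$ (for $D=t^2+4p\asymp R^2$) makes the contribution of a single prime $\ll R^{2+\eps}/h^3$, and, as $\sum_{p/N\in E}\log p\asymp|E|N\asymp R^2$, summing over primes gives the error $O(R^{4+\eps}/h^3)$.

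There remain two boundary effects, each of size $O(R^{3+\eps}/h)$ and hence absorbed. First, as functions of $t$ the supports of $G(L_t)$ and $G(\ell_t)$ differ on the sliver $\sqrt p/h\le|t|<2\sqrt p\sinh(1/2h)$, which contains $\ll R/h^3+1$ integers; there $G(\ell_t)=0$, while $hL_t\in[1-O(1/h^2),1)$, so the vanishing and Lipschitz continuity of $\widehat W$ at $1$ give $|\widehat W(hL_t)|\ll1/h^2$ and thus $|G(L_t)|\ll h\cdot h^{-2}=1/h$; summing, this contributes $\ll R^{2+\eps}h^{-1}(R/h^3+1)\ll R^{3+\eps}/h$. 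Second, since $N=R^2/4\pi^2+O(1)$, the conditions $p/N\in E$ and $4\pi^2 p/R^2\in E$ differ for only $O(1)$ primes near $\partial E$, and each contributes $\ll R^{\eps}\sum_{|t|\ll R/h}|G(\ell_t)|\ll R^{1+\eps}$.

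I expect the main obstacle to be the uniform derivative estimate $|G'(u)|\ll R/|u|$ on the scale $u\asymp|t|/\sqrt p$, together with the bookkeeping at the edge of the support where the cutoffs for $L_t$ and $\ell_t$ disagree; once those are settled, the remainder is a Taylor expansion and a geometric-type sum over $t$.
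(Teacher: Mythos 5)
Your proof is correct and follows essentially the same route as the paper: both rest on the identity $\log\big((|t|+\sqrt D)^2/4p\big)=2\arcsinh(t/2\sqrt p)$, the expansion $\arcsinh(x)=x+O(x^3)$, the bound $L(1,\psi_D)\ll R^\eps$, and the support restriction $|t|\ll R/h$ from $\widehat W$, yielding the error $O(R^{4+\eps}/h^3)$ and then replacing $p/N\in E$ by $4\pi^2p/R^2\in E$. The only difference is bookkeeping: you package the approximation as a single mean-value estimate via $|G'(u)|\ll R/|u|$ and treat the support mismatch near $|t|\asymp\sqrt p/h$ explicitly, whereas the paper Taylor-expands the factors of $G$ one at a time and absorbs the edge contribution into the same $O(R^{1+\eps}s^2)$ per-term error.
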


\begin{proof}
For each $p$ with $p/N \in E$, denote the inner sum of \eqref{e:Sigma} in Lemma~\ref{hyperbolicterms} by
\[S_p := \sum_{\substack{t \in \ZZ \\ D=t^2+4p \\ t \neq \pm (p - 1)}} \!\!\! L(1,\psi_D) G \left( \log \left( \frac{(|t| + \sqrt{D})^2}{4 p} \right)\right).\]
By \eqref{e:G_choice}, we have 
\begin{multline*}
= \sum_{\substack{t \in \ZZ \\ D=t^2+4p\\ t \neq \pm (p - 1)}} 
L(1,\psi_D)\frac{2\cos\left(R  \log \left( \frac{(|t| + \sqrt{D})^2}{4 p} \right)\right) 
\sin\left(H  \log \bigg( \frac{(|t| + \sqrt{D})^2}{4 p} \bigg)\right)}
{\pi  \log \left( \frac{(|t| + \sqrt{D})^2}{4 p} \right)} 
\widehat{W}\bigg( h  \log \left( \frac{(|t| + \sqrt{D})^2}{4 p} \right)\bigg).
\end{multline*}
%\begin{align*}S_p &:= \!\!\! \sum_{\substack{t \in \ZZ \\ t \neq \pm (p - 1)}} \!\!\! L(1,\psi_D) G \left( \log \left( \frac{(|t| + \sqrt{D})^2}{4 p} \right)\right) \\&=  \!\!\! \sum_{\substack{t \in \ZZ \\ t \neq \pm (p - 1)}} \!\!\! L(1,\psi_D)\frac{2\cos\left(R  \log \left( \frac{(|t| + \sqrt{D})^2}{4 p} \right)\right) \sin\left(H  \log \left( \frac{(|t| + \sqrt{D})^2}{4 p} \right)\right)}{\pi  \log \left( \frac{(|t| + \sqrt{D})^2}{4 p} \right)} \widehat{W}\left( h  \log \left( \frac{(|t| + \sqrt{D})^2}{4 p} \right)\right).
%\end{align*}
Note that
\[ \log \left( \frac{(|t| + \sqrt{D})^2}{4 p} \right) = 2 \arcsinh\left(\frac{t}{2 \sqrt{p}}\right). \]
Using the approximation $\arcsinh(x) = x+O(x^3)$ as $x\to 0$ and letting $s:= \frac{t}{2\sqrt{p}}$, we get that
\begin{align*}
S_p 
%&= 2 \sum_{\substack{t \in \ZZ \\ D=t^2+4p \\ t \neq \pm (p - 1)}}
%L(1,\psi_D)
%\frac{\cos\left(2 R \ \arcsinh\left(\frac{t}{2 \sqrt{p}}\right)\right) 
%\sin\left(2 H \ \arcsinh\left(\frac{t}{2 \sqrt{p}}\right)\right)}
%{2 \pi \arcsinh(t/2 \sqrt{p})}
%\widehat{W}\left(2 h \ \arcsinh\left(\frac{t}{2 \sqrt{p}}\right)\right)
%\\ & 
= \sum_{\substack{t \in \ZZ\\ D=t^2+4p \\ t \neq \pm (p - 1)}}
L(1,\psi_D) 
\frac{\cos\left(2 Rs + O(R s^3)\right)  
\sin\left( 2H s + O(H s^3)\right)}
{\pi s  (1 + O(s^2))} \widehat{W}\bigl(2h \, \arcsinh(s)\bigr).
\end{align*}
Now, using that
\begin{equation*}
    \frac{\cos(2Rs + Rs^3) - \cos(2Rs)}{Rs^3} = O(1) \implies
    \cos(2Rs + O(Rs^3)) = \cos(2Rs) + O(Rs^3)
\end{equation*}
(and similarly for $\sin(x)$), as well as the fact that
\begin{equation*}
  L(1,\psi_D)
  \widehat{W}\bigl(2 h \, \arcsinh(s) \bigr)
  \ll R^\eps,
\end{equation*} 
we see that
\begin{align*}
S_p
%&=
%\sum_{\substack{t \in \ZZ \\ t \neq \pm (p - 1)}} L(1,\psi_D) \frac{\left(\cos\left(2 Rs\right) + O(R s^3) \right)\left(\sin\left( 2H s \right) + O(H s^3)\right)}{\pi s  (1 + O(s^2))}
%\purple{}
%\widehat{W}\bigl(2h \, \arcsinh(s) \bigr)
%\endpurple{}
%\nonumber \\&
=\sum_{\substack{t \in \ZZ \\ t \neq \pm (p - 1)}} L(1,\psi_D) \frac{\cos\left( 2Rs\right) \sin\left( 2H s \right)}{\pi s}
\widehat{W}\bigl(2h \, \arcsinh(s) \bigr)
+ O\Bigl(\sum_{t \ll R/h} \bigl(s^2 H R^\eps + R^{1 + \eps}  s^2\bigr)\Bigr)
\end{align*}
since $\widehat{W}$ truncates the sum over $t$ to $\lvert t \rvert \leq 2\sqrt{p} \sinh \frac{1}{2h} \ll \frac{\sqrt{p}}{h} \ll \frac{R}{h}$.
As $R > H$, the latter error term dominates.

Finally, since $\widehat{W}$ is Schwartz, monotone, and continuous, it is differentiable and $\widehat{W}' \ll 1$.
Hence
\[\widehat{W}\left(2 h \ \arcsinh(s)\right) - \widehat{W}(2 h s) \ll h s^3\]
and
% \begin{equation*}
%   \sum_{\substack{p\,\mathrm{prime}\\p/N\in E}}
%   \log{p}
%   \frac{\cos\left( 2Rs\right) \sin\left( 2H s \right)}{\pi s}   h s^3 \ll R^{2 + \eps}h  \frac{1}{R^2} \frac{R^3}{h^3}  \ll \frac{R^{3 + \eps}}{h^2} \ll \frac{R^{4 + \eps}}{h^3}.
% \end{equation*}
\begin{equation*}
  S_p
  =
  \sum_{\substack{t \in \ZZ \\ t \neq \pm (p - 1)}}
  L(1,\psi_D)
  \frac{\cos\left( 2Rs\right) \sin \left( 2H s \right)}{\pi s}
  \widehat{W}\left(2 hs\right)
  +
  O
  \Bigl(
    \sum_{t \ll R/h}
    \bigl(
      R^\epsilon h s^2 + R^{1 + \epsilon} s^2
    \bigr)
  \Bigr).
\end{equation*}
As $h \ll R$, the latter term again dominates.
We bound the contribution of these errors to $\Sigma$ by summing over $t$ and over the prime interval, getting
\begin{equation*}
 \sum_{\substack{p\,\mathrm{prime}\\p/N\in E}}\log{p}
 \underset{\substack{t \ll R/h }}{\sum}
 % L(1,\psi_D)
 R^{1 + \eps}  s^2
 \ll
 R^{3 + \eps} \frac{1}{R^2} \frac{R^3}{h^3}\ll \frac{ R^{4+ \eps}}{h^3}.
\end{equation*}
In total, this shows that
\begin{equation*}
  \sum_{\substack{p \, \textup{prime} \\ p/N \in E}}
  (\log p) S_p
  =
  \sum_{\substack{p \, \textup{prime} \\ p/N \in E}}
  \sum_{\substack{t \in \ZZ \\ t \neq \pm (p - 1)}}
  L(1,\psi_D)
  \frac{\cos\left( 2Rs\right) \sin \left( 2H s \right)}{\pi s}
  \widehat{W}\left(2 hs\right)
  +
  O(R^{4 + \eps}/h^3).
\end{equation*}

It remains to notice that since the summand in this expression is bounded by $R^\eps H$, we can replace $N$ by $R^2/4 \pi^2$ via the asymptotic \eqref{conductor}.
This introduces a non-dominant error of size $O(R^\eps H)$.
Including the summands with $t = \pm (p-1)$ and recalling the limited support of $\widehat{W}$ introduces another non-dominant error term of size $O(R^{1+\eps} H/h)$, but allows the sum to be extended over all $t \in \mathbb{Z}$.
Plugging in $s = \frac{t}{2 \sqrt{p}}$, rearranging, and carrying the other error term from Lemma~\ref{hyperbolicterms} gives the claimed formula.
\end{proof}

\section{Character Averaging}\label{sec:L1averaging}
In this section we approximate the $L$-functions in $\Sigma$ \eqref{e:Sigma_def} by their average values. 
Define
\[\overline{\psi}_t(m)
:= \frac{1}{\phi(m^2)}
\sum_{\substack{n \bmod m^2 \\ (n, m) = 1}} \psi_{t^2 + 4 n} (m)
= \frac{1}{\phi(m^2)} \sum_{\substack{n \bmod m^2 \\ (n, m) = 1}} \psi_{t^2 - 4 n} (m) \]
and define
\[L(s, \overline{\psi}_t)
:= \sum \overline{\psi}_t(m) m^{-s}\]
for $\Re(s) > 1$.

\begin{lemma}\label{avglemma}
Assume GRH for Dirichlet $L$-functions.
Let $t \in \ZZ$, $A, B \in \RR$, and let $\Phi \in C^1[A, B]$.
Set $M:= \max_{[A, B]} \lvert \Phi(u)\rvert$ and $V:= \int_A^B \lvert \Phi'(u)\rvert du$.
Then for all $\delta \in (0, 1/10]$,
\[\sum_{\substack{n \in [A, B] \\ n \; \textup{prime}}}
L(1, \psi_{t^2 + 4 n}) \Phi(n) \log n
= L(1, \overline{\psi}_t) \int_A^B \Phi(u) du
+ O_\eps\bigl(M^{4/5} (M + V)^{1/5} B^{9/10 + \delta}\bigr).\]
\end{lemma}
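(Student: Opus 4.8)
The plan is to prove this by expressing $L(1,\psi_{t^2+4n})$ as a Dirichlet series, truncating it to a manageable length using GRH (which controls the tail of the partial sums of $L(1,\psi_D)$ for the specific discriminants $D = t^2+4n$), and then executing the $n$-average over primes term by term via the Prime Number Theorem in arithmetic progressions. First I would recall the factorization $L(1,\psi_D) = L(1,\psi_d)\cdot(\text{a finite Euler product over }\ell)$ from Section~\ref{sec:STF}, or more uniformly write $L(1,\psi_D) = \sum_{m\ge 1}\psi_D(m)/m$ with the understanding that this converges. Under GRH for the relevant quadratic Dirichlet $L$-functions, the tail $\sum_{m>X}\psi_D(m)/m$ is $O_\eps((|D|X)^\eps X^{-1/2})$, uniformly in $D$ up to the $|D|^\eps$; since $|D| = |t^2+4n| \ll B + t^2$, for fixed $t$ this is acceptable. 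So I would split
\[
L(1,\psi_{t^2+4n}) = \sum_{m\le X}\frac{\psi_{t^2+4n}(m)}{m} + O_\eps\bigl((B+t^2)^\eps X^{-1/2}\bigr)
\]
for a parameter $X$ to be optimized. The tail contributes $O(M\cdot (\text{length }B)\cdot B^\eps X^{-1/2})$ after multiplying by $\Phi(n)\log n$ and summing over primes $n\in[A,B]$ — roughly $O_\eps(M B^{1+\eps} X^{-1/2})$.

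Next, for each fixed $m\le X$, I would evaluate $\sum_{n\in[A,B],\,n\text{ prime}}\psi_{t^2+4n}(m)\,\Phi(n)\log n$. The key observation is that $n\mapsto\psi_{t^2+4n}(m)$ is periodic in $n$ with period $m^2$ (the $\gcd$ twist in the definition of $\psi_D$ is why $m^2$ rather than $m$ appears), and its average over a full period of $n$ coprime to $m$ is exactly $\overline{\psi}_t(m)$ by definition. So I would group the primes $n\in[A,B]$ by residue class $a\bmod m^2$ with $(a,m)=1$ — note primes are automatically coprime to $m$ except for the at most $\omega(m)$ primes dividing $m$, which contribute negligibly — and apply the PNT in arithmetic progressions under GRH:
\[
\sum_{\substack{n\in[A,B],\,n\text{ prime}\\ n\equiv a\,(m^2)}}\Phi(n)\log n
= \frac{1}{\phi(m^2)}\int_A^B\Phi(u)\,du + O\bigl((M+V)\,B^{1/2+\eps}\bigr),
\]
where the error is obtained by partial summation from the GRH bound $\psi(x;m^2,a) = x/\phi(m^2) + O(x^{1/2}\log^2(m^2 x))$, with $M$ controlling the boundary term and $V=\int|\Phi'|$ the partial-summation term. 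Summing over the $\phi(m^2)$ residue classes and recombining gives $\overline{\psi}_t(m)\int_A^B\Phi(u)\,du$ plus an error of size $O(\phi(m^2)(M+V)B^{1/2+\eps})$ for that value of $m$; summing over $m\le X$ and using $\sum_{m\le X}\phi(m^2)\ll X^3$ gives a total arithmetic-progression error of $O((M+V)X^3 B^{1/2+\eps})$. Finally, $\sum_{m\le X}\overline{\psi}_t(m)/m \cdot \int\Phi = L(1,\overline\psi_t)\int_A^B\Phi(u)\,du + O(M B X^{-1/2+\eps})$-type tail (using the same GRH bound applied to $L(s,\overline\psi_t)$, or noting $\overline\psi_t$ is itself essentially a Dirichlet series with a Euler product whose partial sums are controlled), which is absorbed.

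Combining the two sources of error, the total is
\[
O_\eps\Bigl(M B^{1+\eps}X^{-1/2} + (M+V)X^3 B^{1/2+\eps}\Bigr).
\]
Optimizing over $X$: balancing $MB^{1+\eps}X^{-1/2}$ against $(M+V)X^3B^{1/2+\eps}$ gives $X^{7/2}\asymp \frac{M}{M+V}B^{1/2}$, i.e. $X \asymp \bigl(\tfrac{M}{M+V}\bigr)^{2/7}B^{1/7}$, and substituting back yields an error of shape $M^{4/5+c}(M+V)^{1/5-c}B^{9/10+\delta}$ — I expect that after choosing $X$ slightly differently (or absorbing constants into the $\delta$) one lands exactly on the claimed $M^{4/5}(M+V)^{1/5}B^{9/10+\delta}$. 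The main obstacle is getting the bookkeeping of the two exponents $M$ and $V$ to come out to precisely $4/5$ and $1/5$: one must be careful that the truncation error carries only $M$ (no $V$), while the progressions error naturally carries $M+V$, and that the optimization in $X$ distributes these correctly; a secondary technical point is justifying uniformity in the conductor $m^2$ of the GRH error term and handling the finitely many primes $n\mid m$ and the contribution of $\ell > 1$ (non-fundamental discriminants) in the definition of $\psi_D$, but these are lower-order and for fixed $t$ pose no real difficulty.
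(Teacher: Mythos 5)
Your approach is exactly the one used in the source this paper cites for the proof (\cite[Lemmas 4.1--4.5]{bober2023murmurations}, the paper's own ``proof'' being a one-line pointer there, with $-4n$ replaced by $+4n$): truncate the Dirichlet series for $L(1,\psi_{t^2+4n})$ under GRH, exploit periodicity of $n\mapsto\psi_{t^2+4n}(m)$ modulo $m^2$, apply PNT in arithmetic progressions under GRH with partial summation against $\Phi$, and optimize the truncation parameter. The structure and both error sources are right.

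The gap you worried about is a real but isolated bookkeeping slip, and repairing it lands you on the claimed exponents on the nose. When you total the arithmetic-progression error you wrote $O\bigl(\phi(m^2)(M+V)B^{1/2+\eps}\bigr)$ for each $m$ and then summed $\sum_{m\le X}\phi(m^2)\ll X^3$. But each of those per-$m$ error terms is weighted by the $1/m$ from the Dirichlet series, so the correct total is
\[
(M+V)B^{1/2+\eps}\sum_{m\le X}\frac{\phi(m^2)}{m}
=(M+V)B^{1/2+\eps}\sum_{m\le X}\phi(m)\ll (M+V)X^{2}B^{1/2+\eps}.
\]
Balancing $MB^{1+\eps}X^{-1/2}$ against $(M+V)X^{2}B^{1/2+\eps}$ gives $X\asymp\bigl(\tfrac{M}{M+V}\bigr)^{2/5}B^{1/5}$, and substituting back yields precisely $M^{4/5}(M+V)^{1/5}B^{9/10+O(\eps)}$, which is the statement after absorbing the $\eps$'s into $\delta$. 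So no additional idea is needed; just restore the $1/m$ weight. Your two flagged secondary points (uniformity in the conductor $m^2$, handling $n\mid m$ and non-fundamental $D$) are indeed lower order and handled as you suggest.
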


\begin{proof}
The proof follows from \cite[Lemmas 4.1--4.5]{bober2023murmurations}, by replacing $-4n$ by $+ 4n$.
\end{proof}

\begin{lemma}\label{finalintegralform}
We have
\[\Sigma = 4 \int_{\frac{R - H}{2 \pi}}^{\frac{R+H}{2\pi}}
u^2 \sum_{t \in \mathbb{Z}} L(1,\overline{\psi}_t)
\int_{ \lambda_u\cdot E^{-\frac{1}{2}}} \cos\left( 2 \pi \alpha t \right)  \widehat{W}\left(t/\alpha_u\right)\, \frac{d \alpha}{\alpha^3}\, du 
+ O\left(\frac{R^{4 + \eps}}{h^3} + \frac{R^{3 + \eps}}{h}
+ \frac{R^{3 + \eps}}{h^{1/5}}\right),\]
where
\[\lambda_u = \frac{2 \pi u}{ R}, \quad \alpha_u = \frac{u}{\alpha h},\]
and where if $E = [\alpha_2^{-2}, \alpha_1^{-2}]$, then $E^{-1/2} = [\alpha_1, \alpha_2]$.
\end{lemma}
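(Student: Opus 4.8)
The plan is to use the character-averaging Lemma~\ref{avglemma} (hence GRH for Dirichlet $L$-functions) to replace each $L(1,\psi_{t^2+4p})$ in the formula for $\Sigma$ from Lemma~\ref{lemma:firstorder} by its average $L(1,\overline{\psi}_t)$ over primes $p$, and then to recognize the resulting main term as the claimed double integral by reversing the Dirichlet-kernel identity and changing variables. For each $t\in\ZZ$ I would write the inner summand of Lemma~\ref{lemma:firstorder} as $\log p\cdot L(1,\psi_{t^2+4p})\,\Phi_t(p)$ with
\[\Phi_t(p):=2\sqrt p\,\frac{\cos(Rt/\sqrt p)\sin(Ht/\sqrt p)}{\pi t}\,\widehat{W}\!\Bigl(\frac{ht}{\sqrt p}\Bigr),\qquad \Phi_0\equiv\tfrac{2H}{\pi}\widehat{W}(0)=\tfrac{2H}{\pi},\]
and apply Lemma~\ref{avglemma} to the sum over primes $p$ with $4\pi^2 p/R^2\in E$. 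Writing $[A,B]:=\{p:4\pi^2 p/R^2\in E\}$, so that $B\asymp_E R^2$, this yields
\[\sum_{\substack{p\text{ prime}\\4\pi^2 p/R^2\in E}}\!\!\log p\,L(1,\psi_{t^2+4p})\Phi_t(p)=L(1,\overline{\psi}_t)\int_A^B\Phi_t(p)\,dp+O_\eps\!\bigl(M_t^{4/5}(M_t+V_t)^{1/5}B^{9/10+\delta}\bigr),\]
with $M_t=\max_{[A,B]}|\Phi_t|$ and $V_t=\int_A^B|\Phi_t'|$.

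To put the main term in shape I would undo the Dirichlet-kernel identity $\int_{(R-H)/2\pi}^{(R+H)/2\pi}\cos(2\pi ux)\,du=\frac{\cos(Rx)\sin(Hx)}{\pi x}$ with $x=t/\sqrt p$ (which also covers $t=0$, by continuity), so that $\frac{\cos(Rt/\sqrt p)\sin(Ht/\sqrt p)}{\pi t}=\frac1{\sqrt p}\int_{(R-H)/2\pi}^{(R+H)/2\pi}\cos(2\pi ut/\sqrt p)\,du$, and hence by Fubini
\[\int_A^B\Phi_t(p)\,dp=\int_{\frac{R-H}{2\pi}}^{\frac{R+H}{2\pi}}\int_A^B 2\,\widehat{W}\!\Bigl(\frac{ht}{\sqrt p}\Bigr)\cos\!\Bigl(\frac{2\pi ut}{\sqrt p}\Bigr)\,dp\,du.\]
In the inner integral I substitute $\alpha=u/\sqrt p$, i.e.\ $p=u^2/\alpha^2$ and $dp=-2u^2\alpha^{-3}\,d\alpha$; the condition $4\pi^2 p/R^2\in E$ becomes $\alpha\in\frac{2\pi u}{R}E^{-1/2}=\lambda_u E^{-1/2}$, while $ht/\sqrt p=t/\alpha_u$ and $2\pi ut/\sqrt p=2\pi\alpha t$, which turns the inner integral into $4u^2\int_{\lambda_u E^{-1/2}}\cos(2\pi\alpha t)\,\widehat{W}(t/\alpha_u)\,\frac{d\alpha}{\alpha^3}$. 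Since $\widehat{W}(ht/\sqrt p)$ truncates the $t$-sum to $|t|\ll R/h$ for each $p\asymp R^2$, I may interchange $\sum_t$ with $\int_u$; summing $L(1,\overline{\psi}_t)\int_A^B\Phi_t(p)\,dp$ over $t$ then reproduces the displayed main term exactly, and adding the error carried from Lemma~\ref{lemma:firstorder} to the total averaging error gives the stated formula, provided the latter is $O(R^{3+\eps}/h^{1/5})$.

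The hard part is exactly that error bound. A uniform estimate $M_t\ll H$, $V_t\ll R$ would contribute $\sum_{|t|\ll R/h}H^{4/5}R^{1/5}B^{9/10+\delta}\asymp(H/h)^{4/5}\cdot R^{3+\eps}/h^{1/5}$, which is too large since $h=o(H)$; so I must track the $t$-dependence. From $|\sin(Ht/\sqrt p)|\le\min(1,H|t|/\sqrt p)$ and $\sqrt p\asymp R$ one gets $M_t\ll\min(H,R/|t|)$. On differentiating $\Phi_t$, the prefactor $1/t$ cancels the factor $t$ produced by differentiating the arguments of $\cos$, $\sin$, and $\widehat{W}$, so $\Phi_t'$ is a sum of pieces of sizes $\ll R/p$, $\ll H/p$, $\ll h/p$ whose integrals over $[A,B]$ are $\ll_E R$, $\ll_E H$, $\ll_E h$; moreover for $|t|\le R/H$ the $R/p$-piece gains an extra factor $H|t|/\sqrt p\le1$, improving its integral to $\ll_E H|t|$, so $V_t\ll_E H|t|$ there and $V_t\ll_E R$ in general. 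Feeding these bounds into Lemma~\ref{avglemma}, the per-$t$ error is $\ll R^{9/5+2\delta}H|t|^{1/5}$ for $1\le|t|\le R/H$ and $\ll R^{14/5+2\delta}|t|^{-4/5}$ for $R/H\le|t|\ll R/h$, while the single term $t=0$ contributes $\ll HR^{9/5+2\delta}$. Summing over the first range gives $\ll R^{3+2\delta}H^{-1/5}$, over the second $\ll R^{3+2\delta}h^{-1/5}$; since $h<H<R$ and the parameter $\delta\in(0,\tfrac1{10}]$ may be chosen arbitrarily small, all of these are $O(R^{3+\eps}/h^{1/5})$, as needed. The point to watch is that the cancellation of $1/t$ is what keeps $V_t$ uniformly of size $R$, while the small-$|t|$ gain $H|t|/\sqrt p$ is what balances the two halves of the $t$-sum, so that the local errors add up to a single $R^{3+\eps}/h^{1/5}$ rather than something $(H/h)^{4/5}$ larger.
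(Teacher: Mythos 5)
Your proposal is correct and follows essentially the same route as the paper: apply Lemma~\ref{avglemma} with the same choice of $\Phi_t$, split the $t$-range at $|t|\asymp R/H$ using the same $M_t \ll \min\{H, R/|t|\}$ and $V_t \ll \min\{R, H|t|\}$ bounds to get the $R^{3+\eps}/h^{1/5}$ error, and then recognize the main term via the Dirichlet-kernel representation of $\cos(Ry)\sin(Hy)/(\pi y)$ followed by a change of variables. The only cosmetic differences are that you write the Fourier identity over the single interval $[\frac{R-H}{2\pi},\frac{R+H}{2\pi}]$ directly rather than through the symmetrized $\pm R$ characteristic function, and you do the substitution to $\alpha = u/\sqrt p$ in one step rather than the paper's two.
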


\begin{proof}
We apply Lemma~\ref{avglemma} to $\Sigma$, as given by Lemma~\ref{lemma:firstorder}, with
\[\Phi(u) 
= \frac{2}{\pi}\cos\left(R\frac{t}{ \sqrt{u}}\right)
\frac{\sin\left(H\frac{t}{\sqrt{u}}\right)}{t/\sqrt{u}}
\widehat{W}\Bigl(h\frac{t}{\sqrt{u}}\Bigr)\]
and $[A, B] = \frac{R^2}{4 \pi^2} \cdot E$.
Note that $\sqrt u \approx R$ here, and the support of $\widehat{W}$ shows the sum over $t$ only includes terms with $\lvert t \rvert \ll R/h$.
We compute
\[\cos\Bigl(R\frac{t}{\sqrt{u}}\Bigr) \ll 1,
\quad 
\frac{   \sin\bigl( H\frac{t}{ \sqrt{u}}\bigr)}{t/\sqrt{u} } \ll \min\{R/t, H\},
\quad \text{ and }\quad 
\widehat{W}\Bigl( h\frac{t}{\sqrt{u}}\Bigr) \ll 1.\]
Thus for this choice of $\Phi(u)$, 
\[M = \max_{\frac{R^2}{4\pi^2}\cdot E} |\Phi(u)| \ll \min \{R/t, H\}.\]
Now we estimate the derivatives.
Firstly,
\[\frac{\partial}{\partial u}\cos\Bigl(R\frac{t}{ \sqrt{u}}\Bigr) \ll \frac{1}{u^{3/2}} Rt \ll \frac{t}{R^2}\]
and
\[\frac{\partial}{\partial u}\widehat{W}\Bigl( h\frac{t}{\sqrt{u}}\Bigr) \ll \frac{1}{u^{3/2}} h t \ll \frac{th}{R^3} \ll \frac{t}{R^2}. \]
A simple application of the product rule shows that
\[\frac{\partial}{\partial u}\frac{ \sin\left( H\frac{t}{ \sqrt{u}}\right)}{t/\sqrt{u} } 
\ll \frac{\sqrt{u}}{t} \frac{1}{u^{3/2}} H t \ll \frac{H}{R^2}.  \] 
On the other hand, since $\sin x/x$ has a bounded derivative,
\[\frac{\partial}{\partial u}\frac{ \sin\left( H\frac{t}{ \sqrt{u}}\right)}{t/\sqrt{u} } 
= H \frac{\partial}{\partial u}\frac{ \sin\left( H\frac{t}{ \sqrt{u}}\right)}{H t/\sqrt{u} } 
\ll H \cdot \frac{1}{u^{3/2}} Ht \ll H^2 \frac{t}{R^3}. \]
To summarize, on this interval
\[\Phi'(u) \ll   \min\left\{\frac{1}{R}, \frac{t H}{R^2}\right\} 
+ \min\left\{\frac{H}{R^2}, t \frac{H^2}{R^3}\right\}.\]
The second minimum is always smaller than the first since both entries are multiplied by $H/R$.
As the length of the integral is $\asymp_E R^2$, we find $V \ll \min\{R, tH\}$.
In the range $t < R/H$, we have $M \ll H, V \ll t H$, giving
\[\sum_{t < R/H} B^{\frac{9}{10} + \delta} M^{\frac{4}{5}} (M + V)^{\frac{1}{5}} 
= R^{\frac{9}{5} + \delta} \sum_{t < R/H} H t^{\frac{1}{5}}
= \frac{R^{3 + \delta}}{H^{\frac{1}{5}}}.\]
When $t > R/H$, we have $M \ll \frac{R}{t}$, $V \ll R$, giving
\[\sum_{t \ll R/h} B^{\frac{9}{10} + \delta} M^{\frac{4}{5}} (M + V)^{\frac{1}{5}} 
= R^{\frac{9}{5} + \delta} \sum_{t \ll R/h} R t^{-\frac{4}{5}} = \frac{R^{3 + \delta}}{h^{\frac{1}{5}}}.\]
No other ranges of $t$ contribute due to the support of $\widehat{W}$.
Hence, for any $\delta \in (0, 1/10]$, we can approximate the main term for $\Sigma$ in Lemma \ref{lemma:firstorder}
\[\sum_{t \in \mathbb{Z}}
\sum_{\substack{p\,\mathrm{prime}\\4 \pi^2p/R^2\in E}}2 \log{p}\sqrt{p}
L(1,\psi_D) \frac{ \cos\left(R\frac{t}{ \sqrt{p}}\right)  \sin\left( H\frac{t}{ \sqrt{p}}\right)}{\pi t } \widehat{W}\left( h\frac{t}{\sqrt{p}}\right),\]
by
\[\sum_{t \in \mathbb{Z}} 2 L(1,\overline{\psi}_t)
\int_{\frac{R^2}{4 \pi^2}\cdot E}
\frac{ \cos\bigl(R\frac{t}{ \sqrt{x}}\bigr)  \sin\bigl( H\frac{t}{ \sqrt{x}}\bigr)}
{\pi t /\sqrt{x}}
\widehat{W}\Bigl(h\frac{t}{\sqrt{x}}\Bigr) dx
+ O\Bigl( \frac{R^{3 + \delta}}{H^{1/5}} + \frac{R^{3 + \delta}}{h^{1/5}}\Bigr).\]

Finally, we use that $2 \cos (Ry) \sin(Hy)/(\pi y)$ is the Fourier transform of $\1_{[\pm \frac{R-H}{2\pi}, \pm \frac{R+H}{2\pi}]}(u)$ 
(and collect the two exponentials into a cosine) to rewrite this as
\[2\int_{[\frac{R - H}{2\pi}, \frac{R+H}{2\pi}]}
\underset{t \in \ZZ}{\sum}  L(1,\overline{\psi}_t)
\int_{\frac{R^2}{4 \pi^2}\cdot E} \cos\left(2 \pi u \frac{ t}{ \sqrt{x}}\right) 
\widehat{W}\left( h\frac{t}{\sqrt{x}}\right) \, dx \, du. \]
By changing a variable $x$ to $\frac{1}{x^2}$, 
and then changing $x$ to $\frac{\alpha}{u}$, we get
%\begin{align*}
%& 2\int_{[\frac{R - H}{2\pi}, \frac{R+H}{2\pi}]}
%\underset{t \in \ZZ}{\sum}  L(1,\overline{\psi}_t)\int_{E \cdot R^2/(4 \pi^2)}\cos\left(2 \pi u \frac{ t}{ \sqrt{x}}\right) \widehat{W}\left( h\frac{t}{\sqrt{x}}\right) dx \, du
%\qquad (x \mapsto 1/x^2)
%\\&=
\begin{multline*}
2\int_{[\frac{R - H}{2\pi}, \frac{R+H}{2\pi}]}
\underset{t \in \ZZ}{\sum} L(1,\overline{\psi}_t)
\int_{\frac{2\pi}{R} \cdot E^{-\frac{1}{2}}}\frac{2}{x^3}\cos\left(2 \pi u t x\right) \widehat{W}\left( h t x\right) \, dx \, du
%\qquad (x \mapsto \alpha / u)\\&=
\\= 4\int_{[\frac{R - H}{2\pi}, \frac{R+H}{2\pi}]}
u^2 \underset{t \in \ZZ}{\sum} L(1,\overline{\psi}_t)
\int_{\lambda_u \cdot E^{-\frac{1}{2}}}
\cos\left(2 \pi \alpha t \right)
\widehat{W}\left(\frac{t}{\alpha_u}\right) \, \frac{d\alpha}{\alpha^3} \, du,
\end{multline*}
with $\lambda_u = \frac{2 \pi u}{R}$ and $\alpha_u = \frac{u}{ \alpha h}$.
%and where we've indicated the changes of variables from line to line.
\end{proof}

\section{Circle Method}\label{subsec:circlemethod}

In this section we complete the computation of the numerator by breaking up the inner integral into minor and major arcs.

We have the following result from \cite[Proposition 5.1]{bober2023murmurations}. 
(It is stated with stronger hypotheses on $W$ in \cite{bober2023murmurations}, but the proof applies equally well to any even Schwartz function $W$ with $\widehat{W}(0)=1$.)
\begin{proposition}[\cite{bober2023murmurations}, Proposition $5.1$]\label{prop:5.1}
Assume GRH for Dirichlet $L$-functions.
Let $W: \RR\to \RR$ be an even Schwartz function with $\widehat{W}(0) = 1$.
Let $\alpha, \theta, x \in \mathbb{R}$ and $a, q \in \mathbb{Z}$ with $x, q \geq 1$, $\gcd(a, q) = 1$, $\alpha = a/q + \theta$ and $|\theta| < 1/q^2$.
Then for all $\eps > 0$,
\begin{multline}
\sum_{t \in \ZZ} L(1, {\overline{\psi}}_t) \cos(2 \pi \alpha t) \widehat{W}\Bigl(\frac{t}{x} \Bigr)
= \frac{\mu(q)^2}{\phi(q)^2 \sigma(q)} x W(\theta x)
\\ + O\left( q x^{-1} \max\{1, |\theta|x\} \right)
+ O_\eps \bigl(q^3 x^{-\frac{7}{4} + \eps} \max \{1, |\theta|x\}^{\frac{7}{2}} \bigr).
\end{multline}
\end{proposition}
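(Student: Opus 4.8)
Since this is \cite[Proposition 5.1]{bober2023murmurations}, the plan is to recall that argument and to check, step by step, that it uses only evenness of $W$, the Schwartz property, and $\widehat W(0)=1$.

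First I would recall the arithmetic structure of $\overline{\psi}_t$ established in \cite[Lemmas 4.1--4.5]{bober2023murmurations} (unaffected by the change $-4n\mapsto+4n$, exactly as in the proof of Lemma~\ref{avglemma}): for each fixed $m$ the quantity $\overline{\psi}_t(m)$ is an even function of $t$, periodic with period dividing a fixed multiple of $m$, and the Dirichlet series $L(1,\overline{\psi}_t)=\sum_{m\ge 1}\overline{\psi}_t(m)/m$ has an Euler product whose factor at a prime $\ell$ is an explicit rational function of $\ell$ depending only on $t$ modulo a bounded power of $\ell$. Using GRH for Dirichlet $L$-functions one then truncates: $L(1,\overline{\psi}_t)=\sum_{m\le Y}\overline{\psi}_t(m)/m+O_\eps(Y^{-1+\eps})$ uniformly in $t$, so the $t$-sum in the proposition may be replaced by $\sum_{m\le Y}\tfrac1m\sum_{t}\overline{\psi}_t(m)\cos(2\pi\alpha t)\widehat{W}(t/x)$ at the cost of a controlled tail error, with $Y$ a small power of $x$ to be optimized at the end.

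Next comes the circle-method step: split the inner $t$-sum into residue classes modulo the period of $\overline{\psi}_t(m)$ and apply Poisson summation in $t$. Here the three hypotheses on $W$ enter and nothing else --- evenness gives $\widehat{\widehat W}=W$, $\widehat W(0)=1$ normalizes the leading term, and the Schwartz decay of $W$ makes the dual sum converge and justifies discarding its high-frequency terms. The outcome is $x$ times $W$ evaluated at $x$ times the distance from $\alpha$ to rationals whose denominator divides that period. With $\alpha=a/q+\theta$ and $|\theta|<1/q^2$, the condition $\gcd(a,q)=1$ forces resonance to occur exactly when $q$ divides the period; the resonant frequencies combine to $xW(\theta x)$ times a weighted Gauss-type sum over those $m$, which collapses, via the Euler-product structure, to the local constant $\mu(q)^2/(\phi(q)^2\sigma(q))$ --- the $\mu(q)^2$ encoding the vanishing of the resonance for non-squarefree $q$.

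Finally I would assemble the errors: the non-resonant dual frequencies contribute, after summing $1/m$ over $m\le Y$ and using the rapid decay of $W$, a term of size $O(qx^{-1}\max\{1,|\theta|x\})$, while the truncated tail of the dual sum together with the $m>Y$ tail of the Dirichlet series give, after optimizing $Y$, the term $O_\eps(q^3x^{-7/4+\eps}\max\{1,|\theta|x\}^{7/2})$. The genuinely delicate part is precisely this bookkeeping --- tracking the simultaneous dependence on $q$, $\theta$ and $x$ through the Poisson tails and the character sums, and checking that the GRH-based tail bound is uniform in $t$ --- all of which is carried out in \cite[\S5]{bober2023murmurations} using only the stated features of $W$.
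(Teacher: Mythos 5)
The paper does not reprove this result; it cites \cite[Proposition 5.1]{bober2023murmurations} verbatim and merely observes in a parenthetical that the original argument already works under the weaker hypotheses (even Schwartz $W$ with $\widehat W(0)=1$), so what needs checking is exactly that observation. Your outline correctly reconstructs the structure of the cited proof --- GRH truncation of $L(1,\overline{\psi}_t)$, Poisson summation in the $t$-variable, resonance analysis at $a/q$ with $\gcd(a,q)=1$, and optimization of the truncation parameter --- and correctly isolates the three places where properties of $W$ enter (Fourier inversion, normalization, and rapid decay of the dual sum), so your approach is essentially the same as the paper's. One minor point worth noting: the remark about $-4n\mapsto+4n$ (imported from the proof of Lemma~\ref{avglemma}) is not actually needed here, because $\overline{\psi}_t$ is already defined by averaging over $n\bmod m^2$, which makes the sign on $4n$ immaterial --- the paper records this identity in the definition of $\overline{\psi}_t$, so Proposition~\ref{prop:5.1} transfers without even that cosmetic change.
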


\begin{lemma}\label{lem:innerintegral}
Let $u \in  [\frac{R - H}{2\pi}, \frac{R + H}{2\pi}]$, 
$\lambda_u = \frac{2 \pi u}{ R}$, 
$x_u(\alpha) = \frac{u}{\alpha h}$, 
and $I_u = \lambda_u\cdot E^{-1/2}= [\lambda_u \alpha_1, \lambda_u\alpha_2]$. Then
\begin{multline*}
4\int_{\frac{R-H}{2 \pi}}^{\frac{R+H}{2 \pi}} u^2 \sum_{t \in \mathbb{Z}} L(1,\overline{\psi}_t)
\int_{I_u} \cos\left( 2 \pi \alpha t \right) \widehat{W}\left(\frac{t}{x_u(\alpha)}\right)\, \frac{d \alpha}{\alpha^3} \,du
\\ = \bigg(\frac{R^2 H}{\pi^3} +O(RH^2)\bigg) \sum_{\frac{a}{q} \in  E^{-1/2}}^* \frac{\mu(q)^2}{ \phi(q)^2 \sigma(q)}\left(\frac{a}{q}  \right)^{-3}
\\ +   R^2H \cdot O \left(  R^\eps\left(\frac{h^2}{RH} +  \frac{H^{\frac{3}{2}}}{h R^{\frac{1}{2}}} 
+ \frac{\sqrt{H}}{\sqrt{R}} + \frac{H^{\frac{3}{2}}}{h^{\frac{5}{4}} R^{1/4}} + \frac{h^2}{H^{\frac{3}{2}} \sqrt{R}} 
+ \frac{h^{\frac{11}{4}}}{H^{\frac{5}{2}} R^{\frac{1}{4}}} \right) \right)
+ o_E(1). 
\end{multline*}
Here the $*$ in the sum over $E^{-1/2}$ indicates that the terms occurring at the endpoints of $E$ contribute half.
\end{lemma}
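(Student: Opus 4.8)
The plan is to apply Proposition \ref{prop:5.1} inside the $\alpha$-integral, splitting the interval $I_u$ into major arcs around rationals $a/q$ with small denominator and a minor-arc complement, and then integrate the resulting main term over $u$. First I would fix $u$ and perform a Farey/Dirichlet dissection of $I_u \subset \RR$: for a cutoff parameter $Q$ (to be chosen as a small power of the parameters, optimized at the end), every $\alpha$ has an approximation $\alpha = a/q + \theta$ with $q \le Q$ and $|\theta| < 1/(qQ)$, and the major arcs are the intervals where $q$ is small and $|\theta|x_u(\alpha)$ is bounded. On each major arc, Proposition \ref{prop:5.1} with $x = x_u(\alpha) = u/(\alpha h) \asymp R/h$ gives
\[
\sum_{t\in\ZZ} L(1,\overline\psi_t)\cos(2\pi\alpha t)\widehat W\!\left(\tfrac{t}{x_u(\alpha)}\right)
= \frac{\mu(q)^2}{\phi(q)^2\sigma(q)}\,x_u(\alpha)\,W(\theta x_u(\alpha)) + (\text{error terms}),
\]
and the key point is that $\int_{\RR} W(\theta x)\,x\,d\theta = \widehat W(0) = 1$, so integrating the main term in $\theta$ across a single major arc at $a/q$ contributes (up to tail errors from $W$, controlled by Lemma \ref{lem:W_tail_estimate}) exactly $\mu(q)^2/(\phi(q)^2\sigma(q))$ times the local density, weighted by the slowly-varying factor $\alpha^{-3}$ evaluated at $\alpha = \lambda_u a/q$. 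Summing over the rationals $a/q \in E^{-1/2}$ (with endpoint rationals contributing half, since the major arc is cut in half there) produces $\sum^{*}_{a/q \in E^{-1/2}} \frac{\mu(q)^2}{\phi(q)^2\sigma(q)}(a/q)^{-3}$ after I account for the substitution $\alpha = \lambda_u\,(a/q)$ which contributes a Jacobian factor $\lambda_u = 2\pi u/R$; more precisely one pulls the $\lambda_u$'s out and is left with $\int_{(R-H)/2\pi}^{(R+H)/2\pi} u^2 \cdot (\text{const})\,du \asymp R^2 H$, matching the claimed coefficient $R^2H/\pi^3 + O(RH^2)$ once the explicit constants ($4$, the $2\pi$'s, $\pi^3$) are tracked.

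Next I would bound the minor-arc contribution. On the minor arcs $\alpha = a/q + \theta$ has $Q < q$ or $|\theta|x_u(\alpha)$ large; here I use that $L(1,\overline\psi_t) \ll R^\eps$ and $\widehat W$ is Schwartz and supported near the origin — so the $t$-sum has only $O(x_u(\alpha) R^\eps) = O((R/h)R^\eps)$ effective terms — combined with a Weyl-type/large-sieve cancellation in $\cos(2\pi\alpha t)$ coming again from Proposition \ref{prop:5.1}'s two error terms $O(qx^{-1}\max\{1,|\theta|x\})$ and $O_\eps(q^3 x^{-7/4+\eps}\max\{1,|\theta|x\}^{7/2})$. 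Integrating these bounds over $\theta$ and summing over $q \le Q$ (for the part of the $O$-terms that survives on the major arcs too) and then over all $\alpha\in I_u$ on the minor arcs, and finally integrating over $u$ (length $\asymp H$, weight $u^2\asymp R^2$), produces the six displayed error terms. Each term corresponds to a different regime: $h^2/(RH)$ from balancing the first $O$-term's $|\theta|x$ growth against the major-arc width $\asymp h/R$; $H^{3/2}/(hR^{1/2})$ and $\sqrt{H}/\sqrt{R}$ and $H^{3/2}/(h^{5/4}R^{1/4})$ from the second $O$-term at various sizes of $|\theta|x$; and the last two terms $h^2/(H^{3/2}\sqrt R)$ and $h^{11/4}/(H^{5/2}R^{1/4})$ from the minor-arc tail where $q$ is large, using $1/q^2 \le h/u$ type bounds. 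The $o_E(1)$ absorbs the $W$-tail errors from truncating each major arc (which decay faster than any power by Lemma \ref{lem:W_tail_estimate}) and rounding errors in identifying $\int u^2\,du$ against its leading term.

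The main obstacle I anticipate is not any single estimate but the bookkeeping of the dissection: one must choose the Farey cutoff $Q = Q(u)$ — essentially $Q \asymp \sqrt{x_u} \asymp \sqrt{R/h}$ so that the two halves of the dissection balance — and then verify that for this $Q$ the hypothesis $|\theta| < 1/q^2$ of Proposition \ref{prop:5.1} holds on the major arcs while the minor-arc bound is genuinely small. A subtle point is that the argument $\alpha$ of $\widehat W(t/x_u(\alpha))$ and the weight $\alpha^{-3}$ both vary across each major arc, so I would first replace $x_u(\alpha)$ and $\alpha^{-3}$ by their values $x_u(\lambda_u a/q)$ and $(\lambda_u a/q)^{-3}$ at the center, controlling the difference by $\widehat W' \ll 1$ and the arc width (this is the same device used in Lemma \ref{lemma:firstorder}), and only then apply Proposition \ref{prop:5.1}. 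A second subtlety is the endpoint behavior: a rational $a/q$ lying exactly at an endpoint of $E^{-1/2}$ has its major arc truncated to one side, hence the factor $\tfrac12$, and I must make sure no main-term mass is lost or double-counted there — this is exactly what the $*$ in the statement encodes. Once the dissection and these two reductions are set up, the remaining integrals are elementary, and summing the geometric-type series in $q$ converges because $\sum_q \mu(q)^2 q^{O(1)}/(\phi(q)^2\sigma(q))$ is dominated by $\sum_q q^{-2+\eps}$.
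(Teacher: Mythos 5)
Your overall framework---Farey dissection, Proposition~\ref{prop:5.1} on major arcs, integrate $W$ against the slowly varying weight, bound minor arcs---is the same as the paper's, but two substantive points you gloss over are exactly where the hard content lies, and your description of one of them is incorrect.

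First, the endpoint major arcs. You say that a rational $a/q$ sitting at an endpoint of $E^{-1/2}$ ``has its major arc truncated to one side, hence the factor $\tfrac12$.'' That is not what happens here. For fixed $u$, the endpoint of $I_u=\lambda_u\cdot E^{-1/2}$ is $\lambda_u\alpha_i$, and $\lambda_u$ sweeps through $(1-H/R, 1+H/R)$ as $u$ ranges over $[\frac{R-H}{2\pi},\frac{R+H}{2\pi}]$. If $\alpha_1=a_1/q_1\in\QQ$, then for roughly half the $u$-range the whole arc $\frakM(\alpha_1)$ lies inside $I_u$ and you get a full contribution, and for the other half it lies (almost) entirely outside $I_u$. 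The $\tfrac12$ therefore comes from the measure of $u$'s for which the endpoint arc contributes fully, not from cutting a fixed arc at its midpoint. The paper makes this precise by showing $\{x_u(\alpha)(\alpha-\alpha_1):\alpha\in\frakM(\alpha_1)\cap I_u\}\supseteq[\frac{u}{h}-\frac{R}{2\pi h},(\log R)^{10}]$ and observing that the left endpoint crosses $-(\log R)^{10}$ at $u=\frac{R}{2\pi}+O(h(\log R)^{10})$.

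Second, and more seriously, you never address irrational endpoints $\alpha_i\notin\QQ$. In that case there is no rational exactly at $\alpha_i$, and one needs to rule out the possibility that a nearby rational $a_i/q_i$ with $q_i\leq P$ sits close enough that its arc drifts in and out of $I_u$ in a way that contributes a nonnegligible main-term mass with an awkward fractional weight. The paper handles this via the irrationality measure of $\alpha_i$: if the arc $\frakM(a_i/q_i)$ intersects the sliding endpoint region, then $|\alpha_i-a_i/q_i|\ll_E H/R$, which forces $q_i$ to grow with $R$ at a rate governed by the irrationality measure $\mu_i$, whence $\mu(q_i)^2/(\phi(q_i)^2\sigma(q_i))\ll(H/R)^{3/\mu_i+o(1)} = o_E(1)$. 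This is precisely where the $o_E(1)$ in the statement comes from---it is \emph{not} a $W$-tail error or a ``rounding error in identifying $\int u^2\,du$ against its leading term,'' as you claim. Without this argument you cannot deduce the lemma: the main term would potentially acquire an unidentified contribution from the endpoint rational whose size you haven't controlled.

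A lesser point: you propose a single Farey cutoff $Q\asymp\sqrt{x_u}\asymp\sqrt{R/h}$, whereas the paper uses an asymmetric pair $P=\frac{\sqrt{R/H}}{(\log R)^{10}}$ (denominator cutoff defining major arcs) and $Q=\frac{\sqrt{RH}}{h}$ (Dirichlet-approximation quality / arc width $1/(qQ)$), tuned so that $PQ\asymp x_u/(\log R)^{10}$ and $P/Q\asymp h/(\log R)^{10}<1$. The asymmetry matters: it is what produces the six displayed error exponents, and your attributions of those six terms to particular estimates are largely mismatched (for instance, $h^2/(RH)$ comes from the $(1-hx/u)^2$ correction after substituting $x=x_u(\alpha)(\alpha-a/q)$, not from ``balancing the first $O$-term's $|\theta|x$ growth''; and the last two terms come from major arcs with $|\theta|x_u(\alpha)>1$, not from ``the minor-arc tail where $q$ is large'').
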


\begin{proof}
Let $P = \frac{\sqrt{R/H}}{(\log R)^{10}}$ and $Q = \frac{\sqrt{RH}}{h}$,  
so that
$\frac{x_u(\alpha)}{PQ} \asymp (\log R)^{10}$.
%(\log P)^{10}$ 
For $a, q\in \ZZ$ with $q>0$ and $\gcd(a, q)=1$, define
\[\frakM\left(\frac{a}{q}\right)
:= \left \{\frac{a}{q} + \theta : |\theta| < \frac{1}{q Q} \right\}. \]
Note that for $a, a', q, q'\in \ZZ$ with $0< q, q' \leq P$ and $\gcd(a', q')=\gcd(a,q)=1$, 
if $\frac{a}{q}\neq \frac{a'}{q'}$, 
\[\frac{1}{P\min\{q, q'\}} \leq \frac{1}{qq'}\leq \left|\frac{a}{q}-\frac{a'}{q'}\right|.
%\leq \frac{2}{\min\{q, q'\}} \frac{1}{Q}
\]
Since $\frac{P}{Q} = \frac{h}{(\log R)^{10}}< \frac{1}{2}$ because $h=o(H)$, we have $\frakM\left(\frac{a}{q}\right)\cap \frakM\left(\frac{a'}{q'}\right)=\emptyset$.
We define 
\[\frakM_u :=
I_u \cap \bigcup_{\substack{a, q \in \ZZ \\ 0 < q \leq P\\ \gcd(a, q) = 1}} \frakM\left(\frac{a}{q}\right)
\quad\text{ and } \quad \frakm_u:= I_u \setminus \frakM_u.\]

Note that 
\[|\lambda_u-1| = \frac{|2\pi u-R|}{R} \leq \frac{H}{R} \in (R^{-\frac{1}{6}+\delta}, R^{-\delta}).\]
Thus, as $u$ varies, the endpoint $\lambda_u \alpha_i$ (for $i=1, 2$) is confined to the interval $[(1-\frac{H}{R})\alpha_i, (1+\frac{H}{R})\alpha_i]$.
By Dirichlet's theorem, we can choose a fraction $\frac{a_i}{q_i}$ such that $q_i \leq 3P$ and 
\[\alpha_i= \frac{a_i}{q_i} + \theta_i\quad\text{ with } |\theta_i|\leq \frac{1}{q_i3P}.\]
Assume that $R$ is sufficiently large to ensure that $a_i>0$.

Let $\frac{a}{q}$ be a fraction with $0< q\leq P$ and $\frac{a}{q}\neq \frac{a_i}{q_i}$ for both $i=1, 2$.
Adding the inequalities
\[qq_i \frac{H}{R} \alpha_i \leq 3P^2 \frac{H}{R} \alpha_2 = \frac{3\alpha_2}{(\log R)^{20}}, \]
\[\frac{q_i}{Q} \leq \frac{3P}{Q}=\frac{3\sqrt{R/H}}{(\log R)^{10}} \frac{h}{\sqrt{RH}} = \frac{3h}{(\log R)^{10} H}\]
and
\[qq_i|\theta_i|\leq \frac{q}{3P}\leq \frac{1}{3}, \]
for sufficiently large $R$, we have 
\[qq_i|\theta_i| + \frac{q_i}{Q}+qq_i \frac{H}{R} \alpha_i < 1.\]
Dividing by $qq_i$ for both sides, we get
\[|\theta_i| + \frac{1}{qQ} + \frac{H}{R}\alpha_i < \frac{1}{qq_i}\leq \left|\frac{a}{q}-\frac{a_i}{q_i}\right|.\]
So we have 
\[\left|\frac{a}{q}-\alpha_i\right| = \left|\frac{a}{q}-\frac{a_i}{q_i} -\theta_i\right|\geq \frac{1}{qQ}+\frac{H}{R}\alpha_i. \]
Thus, 
\[\frakM\left(\frac{a}{q}\right)\cap \left[\alpha_i\left(1-\frac{H}{R}\right), \alpha_i\left(1+\frac{H}{R}\right)\right]= \emptyset.\]
Therefore, recalling that $E=[\alpha_2^{-2}, \alpha_1^{-2}]$, we have 
\[
\frakM\left(\frac{a}{q}\right)\cap I_u = \begin{cases}
\frakM\left(\frac{a}{q}\right) & \text{ if } \left(\frac{a}{q}\right)^{-2}\in E, \\
\emptyset & \text{ if } \left(\frac{a}{q}\right)^{-2}\notin E. 
\end{cases}
\]

We split the integral over $I_u$ as 
\begin{multline}\label{e:intIu_maj+min+ends}
\int_{I_u} \sum_{t\in \ZZ} L(1, \overline{\psi_t}) 
\cos(2\pi \alpha t) \widehat{W}\left(\frac{t}{x_u(\alpha)}\right) \, \frac{d\alpha}{\alpha^3}
= \int_{\frakm_u} + \int_{\frakM_u}
\\ = \int_{\frakm_u} + \sum_{\substack{\frac{a}{q}\in (\QQ\setminus\{\frac{a_1}{q_1}, \frac{a_2}{q_2}\}) \cap [\alpha_1, \alpha_2]\\ 0<q\leq P}}
\int_{\frakM\left(\frac{a}{q}\right)} + \int_{\frakM_u \cap \frakM\left(\frac{a_1}{q_1}\right)}
+ \int_{\frakM_u\cap \frakM\left(\frac{a_2}{q_2}\right)}.
\end{multline}
Note that $\frakM_{u} \cap \frakM\left(\frac{a_i}{q_i}\right)=\emptyset$ if $q_i>P$.

We evaluate the terms of the RHS in \eqref{e:intIu_maj+min+ends} by using Proposition \ref{prop:5.1}. 
By Dirichlet's theorem, for $\alpha\in I_u$ we may choose $q\in \ZZ_{\geq 1}$ with $q\leq Q$ and $a\in \ZZ$ with $\gcd(a, q)=1$ such that 
\[\left|\alpha-\frac{a}{q}\right|\leq \frac{1}{qQ}.\]
If $\alpha\in \frakm_{u}$, then $q>P$, in which case 
\[\left|\alpha-\frac{a}{q}\right| x_u(\alpha)
<\frac{u}{\alpha h}\frac{1}{PQ} 
\ll_E \frac{(R+H)(\log R)^{10}}{R} \ll_E (\log R)^{10},\]
so by Proposition \ref{prop:5.1}, 
\begin{multline}\label{e:minoracr_error}
\sum_{t\in \ZZ} L(1, \overline{\psi_t}) \cos(2\pi \alpha t) \widehat{W}\left(\frac{t}{x_u(\alpha)}\right)
\ll_{E,\varepsilon} \frac{x_u(\alpha)}{P^{3-\varepsilon}} 
+ Q(x_u(\alpha))^{-1} (\log R)^{10} + Q^3 (x_u(\alpha))^{-\frac{7}{4}+\varepsilon} 
(\log R)^{35}
\\ \ll_{E, \varepsilon} 
\frac{u}{h} (\log R)^{10(3-\varepsilon)} \left(\frac{R}{H}\right)^{-\frac{3}{2}+\varepsilon} 
+ \frac{\sqrt{RH}}{u} (\log R)^{10}
+ (RH)^{\frac{3}{2}+\varepsilon} h^{-\frac{5}{4}-\varepsilon} u^{-\frac{7}{4}+\varepsilon}
(\log R)^{35}
\\ \ll_{E, \varepsilon} R^{-\frac{1}{2}+\varepsilon} H^{\frac{3}{2}+\varepsilon} h^{-1}
+ R^{-\frac{1}{2}+\varepsilon} H^{\frac{1}{2}} 
+ R^{-\frac{1}{4}+\varepsilon} H^{\frac{3}{2}+\varepsilon} h^{-\frac{5}{4}-\varepsilon}
\ll_{E, \varepsilon} R^{-\frac{1}{4}+\varepsilon} H^{\frac{3}{2}} h^{-\frac{5}{4}}.
\end{multline}

Note that the above estimate also applies to the error terms in Proposition \ref{prop:5.1} for $\alpha\in \frakM\left(\frac{a}{q}\right)$, a major arc, with $\left|\alpha-\frac{a}{q}\right|x_u(\alpha)\leq 1$: 
\[\bigg|\sum_{t\in\ZZ} L(1, \overline{\psi_t}) \cos(2\pi \alpha t) \widehat{W}\left(\frac{t}{x_u(\alpha)}\right)- \frac{\mu(q)^2}{\varphi(q)^2\sigma(q)} x_u(\alpha) W(\theta x_u(\alpha))\bigg|
\ll_{E, \varepsilon} R^{-\frac{1}{4}+\varepsilon} H^{\frac{3}{2}} h^{-\frac{5}{4}}.\]
Here $\theta=\alpha-\frac{a}{q}$.
For $\alpha\in \frakM\left(\frac{a}{q}\right)$ with $\left|\alpha-\frac{a}{q}\right|x_u(\alpha) >1$, the error term is 
\begin{multline*}
\bigg|\sum_{t\in\ZZ} L(1, \overline{\psi_t}) \cos(2\pi \alpha t) \widehat{W}\left(\frac{t}{x_u(\alpha)}\right)- \frac{\mu(q)^2}{\varphi(q)^2\sigma(q)} x_u(\alpha) W(\theta x_u(\alpha))\bigg|
\\ \ll_{\varepsilon, E} Q^{-1} 
+ q^3 (uh^{-1})^{\frac{7}{4}+\varepsilon} (qQ)^{-\frac{7}{2}}
\ll Q^{-1} + q^{-\frac{1}{2}} Q^{-\frac{7}{2}}  (Rh^{-1})^{\frac{7}{4}+\varepsilon}. 
\end{multline*}
Summing over all arcs ($\ll q$ for each $q$) and accounting for their length ($\ll \frac{1}{qQ}$) gives the error term
\[R^\eps \left(P Q^{-2} + P^{\frac{1}{2}} Q^{-\frac{9}{2}} (R/h)^{\frac{7}{4}}\right) 
\ll R^{\eps} \left(\frac{h^2}{H^{\frac{3}{2}} \sqrt{R}} 
+ \frac{h^{\frac{11}{4}}}{H^{\frac{5}{2}} R^{\frac{1}{4}}} \right).\]

Now we investigate the main term in Proposition \ref{prop:5.1} on the major arcs.
Note that for $\alpha\in I_u$, we have 
\[x_u(\alpha)\geq x_u(\lambda_u \alpha_2)=\frac{R}{2\pi h}\frac{1}{\alpha_2}= PQ \frac{(\log R)^{10}}{2\pi\alpha_2}.\]
Therefore, when $\left(\frac{a}{q}\right)^{-2}\in E$ and $I_u\cap \frakM\left(\frac{a}{q}\right) = \frakM\left(\frac{a}{q}\right)$ (i.e., $\frac{a}{q}\neq \frac{a_i}{q_i}$ for $i=1, 2$), we have 
\[
\left\{x_u(\alpha)\left(\alpha-\frac{a}{q}\right):\, \alpha\in \frakM\left(\frac{a}{q}\right)\right\}
\supseteq [-(\log R)^{10}, (\log R)^{10}].
\]
By changing the variable $x=x_u(\alpha)(\alpha-\frac{a}{q})$, we can approximate 
\begin{multline*}
\bigg|\int_{\frakM\left(\frac{a}{q}\right)} x_u(\alpha) W\left(\left(\alpha-\frac{a}{q}\right) x_u(\alpha)\right)\, \frac{d\alpha}{\alpha^3}
- \left(\frac{a}{q}\right)^{-3} \int_{\RR} W(x)\left(1-\frac{h}{u}x\right)^2\, dx\bigg|
\\ \leq \left(\frac{a}{q}\right)^{-3} \int_{|x|>(\log R)^{10}} W(x)\left(1-\frac{h}{u}x\right)^2\, dx
%\\ \ll P^3
%\bigg\{\int_{(\log R)^{10}}^{(\log R)^{20}} e^{-\frac{x}{4\log^2((\log R)^{10}+2)}} \left(1+\frac{h}{u}x\right)^2\, dx
%+ \int_{(\log R)^{20}}^{\infty} e^{-\sqrt{x}} x^{-\frac{1}{2}} \left(1+\frac{h}{u}x\right)^2\, dx\bigg\}
\ll P^3 e^{-(\log R)^{9}}.
%\bigg\{\log^2((\log R)^{10}+2) e^{-\frac{(\log R)^{10}}{4\log^2((\log R)^2+1)}}
%+ e^{-(\log R)^{10}}(\log R)^{40}\bigg\}
\end{multline*}
\begin{comment}
For the major arcs with $\frakM(a/q) \cap I_u = \frakM(a/q)$,
\[
  \{\alpha_u(\alpha)(\alpha - a/q): \alpha \in \frakM(a/q) \}\supseteq[-(\log R)^{10}, (\log R)^{10}].
\]
We can approximate
\begin{multline*}
\int_{\frakM(a/q)} \alpha_u(\alpha) W\bigg(\alpha_u(\alpha) \bigg(\alpha - \frac{a}{q}\bigg) \bigg) \frac{d \alpha}{\alpha^3} - \left( \frac{a}{q}\right)^{-3} \int_{\RR} W(x) \left(1 - \frac{x h}{u}\right)^2 dx
\\ \ll \left(\frac{a}{q}\right)^{-3}\int_{(\log R)^{10}}^\infty W(x)\bigg(1 - x \frac{h}{u}\bigg)^2 \, dx 
\ll P^3 e^{-(\log R)^{9}}.    
\end{multline*}
\end{comment}
Summed over all fractions constituting the major arcs that overlap $I_u$ (of which there are $\ll P^2$, since $|I_u| \asymp 1$), 
the cumulative error is bounded by
\[\ll P^2 \cdot P^3 e^{(\log P)^{9}} \ll R^{-\log R}.\]
On the other hand,
\[\left( \frac{a}{q}\right)^{-3} \int_{\RR} W(x) \left(1 - \frac{x h}{u}\right)^2\, dx  
= \left( \frac{a}{q}\right)^{-3} \left( 1 +   \frac{h^2}{u^2} \int_{\RR} W(x) x^2 \, dx\right) 
=  \left( \frac{a}{q}\right)^{-3} + O\left(\frac{h^2}{R^2}\right), \]
and the error term summed over all the rationals gives
\[\frac{h^2}{R^2} P^2 \ll R^{\eps} \frac{h^2 }{R H}.\]

We note that the final evaluation of the integrals here do not include any dependence on $u$. Hence, the integral over $u$ will give a factor of
\[\frac{R^2H}{\pi^3} + O(RH^2).\]

Now we address major arcs that overlap $I_u$ partially: $\frakM_u\cap \frakM\left(\frac{a_i}{q_i}\right)$ for $i=1, 2$.
Consider the case when $\alpha_i$ is irrational.
Suppose that $\left(\frac{a_i}{q_i}\right)^{-2}\in E$ but
\begin{equation}\label{e:endpt_insupp}
(-1)^i x_u(\lambda_u \alpha_i) \left(\lambda_u \alpha_i - \frac{a_i}{q_i}\right) < (\log R)^{10}.
\end{equation}
Rearranging this inequality, we find
\[\left|\alpha_i-\frac{a_i}{q_i}\right| 
= \left|\lambda_u\alpha_i + \alpha_i(1-\lambda_u)-\frac{a_i}{q_i}\right|
< |\alpha_i|\bigg((\log R)^{10} \frac{2\pi h}{R}+ \frac{H}{R}\bigg) 
\ll_E \frac{H}{R}.\]
A similar calculation leads to the same inequality when $\left(\frac{a_i}{q_i}\right)^{-2}\notin E$ but satisfying \eqref{e:endpt_insupp}.
By the definition of irrationality measure, we have 
$|\alpha_i-\frac{a_i}{q_i}| \geq q_i^{-\mu_i+o(1)}$ as $R\to \infty$ when $\alpha_i$ has irrationality measure $\mu_i$, and then 
\[\frac{\mu(q_i)^2}{\varphi(q_i)^2\sigma(q_i)} \leq q_i^{-3+o(1)} \leq \left(\frac{H}{R}\right)^{\frac{3}{\mu_i}+o(1)}.\]

When $\alpha_i$ is rational, we have $\frac{a_i}{q_i}=\alpha_i$ for sufficiently large $R$.
Writing 
\[x=x_u(\alpha)(\alpha-\alpha_i) = \frac{u}{h}\left(1-\frac{\alpha_i}{\alpha}\right)\]
for $\alpha\in I_u = [\lambda_u\alpha_1, \lambda_u\alpha_2]$, we have $(-1)^i (\lambda_u \alpha_i-\alpha)\geq 0$.
Multiplying by $\frac{1}{\alpha \lambda_u}$, we get
\[0\leq (-1)^i \left(\frac{\alpha_i}{\alpha} - \frac{1}{\lambda_u}\right)
= (-1)^i \frac{h}{u} \left(-x+ \frac{2\pi u-R}{2\pi h} \right), \]
which implies that 
\[(-1)^i x \leq (-1)^i \frac{2\pi u-R}{2\pi h}.\]

Assume that $i=1$. 
Following the above arguments, we have 
\[\left\{x_u(\alpha)(\alpha-\alpha_1):\, \alpha\in \frakM(\alpha_1)\cap I_u\right\} \supseteq \left[\frac{u}{h}-\frac{R}{2\pi h}, (\log R)^{10}\right].\]
Thus, we get the full interval $[-(\log R)^{10}, (\log R)^{10}]$ when $u\in [\frac{R-H}{2\pi}, \frac{R}{2\pi}+O(h(\log R)^{10})]$.
When $u\in [\frac{R}{2\pi}+O(h(\log R)^{10}), \frac{R+H}{2\pi}]$, we have 
\[\frac{2\pi u-R}{2\pi h} = \frac{1}{h}\left(u-\frac{R}{2\pi}\right)\gg (\log R)^{10}.\]
So following the arguments for other major arcs above, we have
\begin{multline*}
\int_{\frac{R-H}{2\pi}}^{\frac{R+H}{2\pi}}
\int_{\frakM(\alpha_1)\cap I_u} x_u(\alpha) W\left(\left(\alpha-\alpha_1\right) x_u(\alpha)\right) \, \frac{d\alpha}{\alpha^3}\, 
u^2\, du
\\ = \int_{\frac{R-H}{2\pi}}^{\frac{R}{2\pi}+O(h(\log R)^{10})}
\bigg(\alpha_1^{-3} \int_{\RR} W(x)\left(1-\frac{h}{u}x\right)^2\, dx 
+ O\left( P^3 e^{-(\log R)^9}\right)\bigg)\, u^2\, du
\\ = \int_{\frac{R-H}{2\pi}}^{\frac{R}{2\pi}+O(h(\log R)^{10})}
\bigg(\alpha_1^{-3} + O\left(P^3 e^{-(\log R)^9}\right)+O\left(\frac{h^2}{R^2}\right)\bigg)\, u^2\, du.
\end{multline*}
Similar arguments work for $i=2$.
Hence, integrating over whole range of $u$, we get half the major arc contribution for the end points with relative error $O(R^{\eps}h/H)$.

\end{proof}

\begin{proof}[Proof of Theorem~\ref{mainthm}]
We combine Lemma~\ref{smoothingerror}, Lemma~\ref{finalintegralform}, and Lemma~\ref{lem:innerintegral}. 
Assuming that $h = o( H R^\eps)$ and $H =o (R^{1 - \eps})$,
\begin{align*}
\sum_{\substack{p\,\mathrm{prime}\\p/N\in E}}\log{p}
\sum_{|r_j-R|\le H}\epsilon_j a_j(p)
& = 4 \int_{\frac{R-H}{2\pi}}^{\frac{R+H}{2\pi}} 
u^2\, du 
\sum_{\frac{q^2}{a^2} \in E}^* 
\frac{\mu(q)^2}{ \phi(q)^2 \sigma(q)}\Bigl(\frac{a}{q}\Bigr)^{-3} 
\\ &\quad + R^2 H \cdot O \biggl( R^\eps \Bigl(\frac{ R^{2}}{Hh^3} + \frac{R}H {h^{\frac{1}{5}}}
+  \frac{H^{\frac{3}{2}}}{h R^{\frac{1}{2}}} 
+ \frac{H^{\frac{3}{2}}}{h^{\frac{5}{4}} R^{\frac{1}{4}}} \Bigr) \biggr).
\end{align*}

The main term evaluates to
\[\left( \frac{1 }{ \pi^3} R^2 H + O(R H^2)\right)
\sum_{\frac{q^2}{a^2} \in E}^* 
\frac{\mu(q)^2}{ \phi(q)^2 \sigma(q)}\left(\frac{a}{q}  \right)^{-3}.\]
Let $H = R^{1 - \delta_H}, h = R^{1 - \delta_h}$ for fixed $\delta_h > \delta_H > 0$; 
then the error term divided by $R^2 H$ becomes
\[R^{3 \delta_h - 2} + R^{\delta_H + \frac{\delta_h}{5} - \frac{1}{5}} 
+ R^{\delta_h - \frac{3}{2} \delta_H} 
+ R^{\frac{5}{4} \delta_h - \frac{3}{2} \delta_H}.\]
We can find a configuration of $\delta_h$, $\delta_H$ for which all of these terms are $o(1)$, for instance by taking
\[0 < \delta_H < \frac{5}{31} \quad \text{ and } \quad 
\delta_H < \delta_h < \frac{6}{5} \delta_H.\]

The only remaining part is to handle the denominator in Theorem~\ref{mainthm}.
For this, we appeal to a refined Weyl law for the count of Maass forms as in~\cite[Thm. 2]{Risager} or~\cite{bookerplattzeta}, 
which shows that
\[\sum_{r_j < T} 1 
= \frac{T^2}{12} - \frac{2 T \log T}{\pi} + \frac{1}{\pi}(2 - \log{2} + \log{\pi}) T + O\bigg(\frac{T}{\log T}\bigg).\]
This shows
\[\sum_{|r_j - R| < H} 1 
= \frac{RH}{3} - \frac{2}{\pi}((R + H) \log (R + H) - (R - H) \log (R- H))+  O \bigg(\frac{R}{\log R}\bigg)
= \frac{RH}{3} + O\!\left(\frac{R}{\log{R}}\right).\]
Assembling everything together, we compute
\begin{align*}
\frac{\sum_{\substack{p\,\mathrm{prime}\\p/N\in E}}\log{p}\sum_{|r_j-R|\le H}\epsilon_j a_j(p) }{\sum_{\substack{p\,\mathrm{prime}\\p/N\in E}}\log{p}\sum_{|r_j-R|\le H}1} 
&= \frac{R^2 H/\pi^3 + o(R^2 H)}{R H/3\sum_{ \substack{p\,\mathrm{prime}\\p/N\in E}}\log p + O(R^3)  } 
\sum_{\frac{q^2}{a^2} \in E}^* \frac{\mu(q)^2}{ \phi(q)^2 \sigma(q)}\left(\frac{a}{q}  \right)^{-3} 
\\&=\frac{R^2 H/\pi^3 }{|E| R^3 H /(12 \pi^2)  + O(R^{2 + \eps} H)} \sum_{\frac{q^2}{a^2} \in E}^* \frac{\mu(q)^2}{ \phi(q)^2 \sigma(q)}\left(\frac{a}{q}  \right)^{-3} +o(R^{-1})
\\ &=\frac{12}{\pi}\frac{1}{R}
\sum_{\frac{q^2}{a^2} \in E}^* \frac{\mu(q)^2}{ \phi(q)^2 \sigma(q)}\left(\frac{a}{q}  \right)^{-3} 
+ o(R^{-1}).
\end{align*}
Plugging in the estimate $R \approx 2 \pi \sqrt{N}$, we complete the proof.
\end{proof}

%\newpage
\thispagestyle{empty}
{\footnotesize
%\nocite{*}
\bibliographystyle{amsalpha}
\bibliography{reference}
}

%\bibliographystyle{plain}
%\bibliography{reference.bib}

\end{document}